\newtheorem*{theorem*}{Theorem}
\newtheorem{theorem}{Theorem}[section]
\newtheorem{proposition}[theorem]{Proposition}
\newtheorem{lemma}[theorem]{Lemma}
\theoremstyle{definition}
\newtheorem{definition}[theorem]{Definition}
\newtheorem{example}[theorem]{Example}
\theoremstyle{remark}
\numberwithin{equation}{section}
\def\N{{\mathbb N}}
\def\Z{{\mathbb Z}}
\def\R{{\mathbb R}}
\def\xy{{N_f(x,y,\delta)}}
\def\pf{{p \:\mathscr{F}}}
\def\f{{\mathscr{F}}}
\begin{document}
	\begin{Large}
		
		\title[On Stronger Forms of Expansivity]{On Stronger Forms of Expansivity}
		\author[S. H. Joshi]{Shital H. Joshi}
		\address{Department of Mathematics, Shree M. P. Shah Arts and Science College, Surendranagar, India\\
			Department of Mathematics, Faculty of Science, The Maharaja Sayajirao University of Baroda, Vadodara, India}
		\email{shjoshi11@gmail.com}
		\author[E. Shah]{Ekta Shah}
		%    Address of record for the research reported here
		\address{Department of Mathematics, Faculty of Science, The Maharaja Sayajirao University of Baroda, Vadodara, India}
		\email{shah.ekta-math@msubaroda.ac.in}

		\subjclass[2020]{Primary 37B05, 37B65, 37B99}
		
		\keywords{Positively expansive maps, generators, $(\f,\mathscr{G})$-shadowing property}

		\begin{abstract}
			\noindent We define the concept of stronger forms of positively expansive map and name it as $\pf-$expansive maps. Here $\f$ is a family of subsets of $\N$. Examples of positively thick expansive and positively syndetic expansive maps are constructed here. Also, we obtain conditions under which a positively expansive map is positively co--finite expansive and positively syndetic expansive maps. Further, we study several properties of $\pf-$expansive maps. A characterization of $\pf-$expansive maps in terms of $\f^*-$generator is obtained. Here $\f^*$ is dual of $\f$. Considering $(\Z,+)$ as a semigroup, we study $\f-$expansive homeomorphism, where $\f$ is a family of subsets of $\Z \setminus \{0\}$. We show that there does not exists an expansive homeomorphism on a compact metric space which is $\f_s-$expansive. Also, we study relation between $\f-$expansivity of $f$ and the shift map $\sigma_f$ on the inverse limit space.
		\end{abstract}
		
		\maketitle

		\section{Introduction}
		
		\noindent Let $X$ be a metric space and $f : X \longrightarrow X$ be a continuous map. Map $f$ is said to be \emph{positively expansive} if there exists $\delta >0$, known as an \emph{expansive constant}, such that for any two distinct points $x, \; y$ in $X$, there is a positive integer $n$ satisfying $d\left(f^n(x), f^n(y)\right)>\delta$ \cite{NA}. In 1952, Schwartzman introduced the notion of positively expansive maps \cite{SS}.   One of the basic problem here is existence/non-existence of expansive maps on a given metric space. Since its inception expansivity has been extensively studied in the areas of Topological Dynamics, Ergodic Theory, Continuum Theory, and Symbolic Dynamics. One of the good monograph for the study of expansive maps is \cite{HA}. 
		
		\medskip
		\noindent An important aspect of the study of expansivity is the study of its various generalizations and variations in different settings.  In 1970, Reddy, \cite{wr}, studied point--wise expansive maps whereas $h-$expansivity was studied by R. Bowen \cite{rb}. Kato defined and studied the notion of continuum--wise expansive homeomorphisms \cite{HK}. Shah studied notion of positive expansivity of maps on metric $G-$spaces \cite{es} whereas Barzanouni studied finite expansive homeomorphisms \cite{ali}.  Authors in \cite{ase} studied expansivity for group actions. In this paper we study stronger forms of expansivity.
		
		\medskip
		\noindent  Roughly, a map $f$ is positively expansive if given any two distinct point in the space $X$, there exists a positive integer $n$ such that the $n^{th}$ iterate of the two points under the map $f$ are separated significantly. The infinitude of the set of positive integers where this significant separation occurs is guaranteed due to the separation through positive iterates of $f$. In \cite{TKM}, Moothathu for the first time studied stronger forms of sensitivity on the basis of largeness of the subsets of the set of positive integers  where this significant separation occurs. Since every positively expansive map is sensitive if the space $X$ do not have isolated points, it makes sense to study expansivity through this infiniteness of the subsets of the set of positive integers.

		\medskip
		\noindent The paper is organized in following manner: In Section 2 we discuss preliminaries required for the content of the paper. Through Example \ref{eg1} we note that $\xy$ can be the empty set if the points gets separated by $0^{\text{th}}-$iterate of the map $f$. Because of this observation, we consider map to be positively expansive if the points gets separated by positive iterates and this further provide the condition under which $\xy$ is an infinite set. In Section 3 we define the stronger forms of positively expansive maps and name it as $\pf-$expansive maps. We give examples of positively thick expansive and positively syndetic expansive maps. Further, we obtain a condition under which a positively expansive map is positively co--finite expansive map and positively syndetic expansive map. Next, we prove many properties of $\pf-$expansive maps and give necessary examples to support the assumption. In Section 4, we characterize $\pf-$expansive maps through generators. Considering $(\Z, +)$ as a semigroup, in Section 5 we study $\f-$expansive homeomorphisms and show that there does not exists any expansive homeomorphism which is $\f_s-$expansive. We also, study relation between $\f-$expansivity of a homeomorphism $f$ and the shift map $\sigma_f$ on the inverse limit space.

		\section{Preliminaries}
		
		\medskip
		\noindent A subset $A$  of $\N$ is said to be a \emph{co--finite} if $\N \setminus A$ is finite,
		\emph{thick} if $A$ contains arbitrary large blocks of consecutive integers and \emph{syndetic} if  $A$ is infinite and there exists $M\in \N$ such that the gap between two consecutive integers in $A$ is bounded by $M$. A \emph{family} of subsets of $\N$ is any subset $\mathscr{F}$ of $\mathscr{P}(\N)$, the power set of $\N$, which is upward hereditary, i.e., if $B\in \mathscr{F}$ and $B\subset C\subset \N$ then $C\in \mathscr{F}$. The dual of a family $\mathscr{F}$ of subsets of $\N$ is denoted by $\mathscr{F}^*$ and is given by $\mathscr{F}^*=\{A\subset \N:A\cap B\neq \Phi \mbox{ for all }B\in \mathscr{F}\}$ \cite{Oprocha}. The families of co--finite, thick and syndetic subsets of $\N$ are denoted by $\mathscr{F}_{cf}$, $\mathscr{F}_t$, and $\mathscr{F}_s$ respectively. 
		
		\smallskip
		\noindent Let $f:X \longrightarrow X$ be a continuous map defined on a compact metric space $X$. A finite open cover $\mathscr{U}$ of $X$ is called a \emph{generator} for $f$ if for every bisequence $\{A_n\}$ of members of $\mathscr{U}$, $\bigcap_{n=0}^{\infty}f^{-n}(\bar{A_n})$ is at most one point and a \emph{weak generator} if $\bigcap_{n=0}^{\infty}f^{-n}(A_n)$ is at most one point. Here $\bar{A}$ denotes a closure of $A$. By an argument used in \cite[Theorem 2.2.3]{HA} it is easy to observe that $f$ is positively expansive if and only if $f$ has a generator. Fix $\delta>0$. A sequence $\{x_i:i\geq 0\}$ is said to be a \emph{$\delta-$pseudo orbit} if $d(f(x_i),x_{i+1})<\delta$ for all $i\geq 0$. For $x,\; y \in X$, a \emph{$\delta-$chain} from $x$ to $y$ is a finite $\delta-$pseudo orbit from $x$ to $y$. A map $f$ is said to be \emph{chain mixing} if for every $x, \; y\in X$ and $\delta>0$, there is $N\in \N$ such that for any $n\geq N$, there is $\delta-$chain of length $n$ from $x$ to $y$. A subset $Y$ of $X$ is called $invariant$ if $f(Y)\subseteq Y$. A map $f$ is called \emph{minimal} if each point $x\in X$ has a dense orbit in $X$. An invariant subset $Y\subset X$ is called minimal if the map $f_{|Y}:Y\longrightarrow Y$ is minimal map. A point $x\in X$ is called a \emph{minimal point} if it lies in some minimal subset of $X$. It is known that if $x$ is a minimal point of $f$ and $U$ is a neighbourhood of $x$ then the set $N_f(x,U)=\{n\in N:f^n(x)\in U\}$ is syndetic \cite{HF}. 
		
		\medskip
		\noindent Suppose $f:X\longrightarrow X$ is a homeomorphism defined on a compact metric space $X$. A closed subspace $X_f=\{\bar{x}=(x_i)_{i=-\infty}^{\infty}:x_i\in X \mbox{ and } f(x_i)=x_{i+1},\mbox{ for each }i\in \Z\}$ of $X^\Z$ is called the \emph{inverse limit space} generated by $f$. The homeomorphism $\sigma_f:X_f\to X_f$, defined by $\sigma_f \left((x_i)\right)=(y_i)$ such that $y_i=x_{i+1}=f(x_i)$ for all $i\in \Z$, is called the \emph{shift map} induced by $f$.

		\section{Positively $\mathscr{F}-$Expansive Maps}
		
		\medskip
		\noindent Let $X$ be a metric space and $f : X \longrightarrow X$ be a continuous map. For $x,\; y\in X$ and $\delta>0$ denote  
		$$N_f(x,y,\delta)=\left\{n\in \N:d\left(f^n(x),f^n(y)\right)>\delta\right\}.$$
		
		\noindent Then map $f$ is positively expansive with expansive constant $\delta$ if for any $x,\; y \in X$, $\xy \neq \Phi$. Note that $\xy \neq \Phi$ implies $x \neq y$.
		
		\smallskip
		\begin{example}\label{eg1}
			Consider the subspace $X=\left\{\frac{1}{n}, (1-\frac{1}{n}) : n \in \N\right\}$  of $\R$ with the usual metric. For  $x \in X$, let $x_+$  denote the element of $X$   which is immediately right to $x$ and $x_{-}$ that element of $X$  which is immediately left to  $x$. Define the map  $f:X \longrightarrow X$ by
			\[ f(x) = \left\{ \begin{array}{lll}
				x, & \mbox{if  $x\in\left\{0, \frac{1}{2}, 1\right\}$}\\
				x_+,  & \mbox{if  $x<\frac{1}{2}$}\\
				x_-,  & \mbox{if  $x> \frac{1}{2}$}\\
			\end{array}
			\right. \]
			Take $\delta$ such that $0< \delta< \frac{1}{6}$. The map $f$ is not positively expansive as $N\left(\frac{1}{2}, \frac{1}{3}, \delta\right) = \Phi$. Further, for any $x,\; y \in X\setminus\left\{0, \frac{1}{2}, \frac{1}{3}, \frac{2}{3}, 1\right\}$,  $\xy$ is always a finite set.
		\end{example}	
		
		\smallskip
		\noindent In view of Example \ref{eg1} it follows that the infinitude of $\xy$ is not guaranteed if the separation of distinct points occurs through $0^{th}-$iterate of $f$. However, if the significant separation occurs through the positive iterates of $f$, then the set $\xy$ is always an infinite set. For, suppose $f$ is a positively expansive map with expansive constant $\delta$. Then for $x,\; y \in X$ with $x\neq y$, $\xy \neq\Phi$.  Let $n \in \xy$. Then $f^n(x) \neq f^n(y)$. But this implies that $N_f\left(f^n(x),f^n(y),\delta\right)\neq\Phi$. If $k \in N_f\left(f^n(x),f^n(y),\delta\right)$, then $k+n \in \xy$. Continuing this way it can be observed that $\xy$ is always an infinite set. This motivates our definition of 
		positively $\mathscr{F}-$expansive map.
		
		\smallskip
		\begin{definition}
			Let $f:X\longrightarrow X$ be a continuous map defined on a metric space $X$ and $\f$ be a family of subsets of $\N$. Then $f$ is said to be a \emph{positively $\mathscr{F}-$expansive} if there exists $\delta>0$ such that for any two distinct points $x$ and $y$ in $X$ the set $N_f(x,y,\delta)\in \mathscr{F}$. The constant $\delta$ is called an \emph{$\mathscr{F}-$expansive constant} for $f$. Shortly we write positively $\mathscr{F}-$expansive as $\pf -$expansive.
		\end{definition}
		
		\noindent Note that for different families $\f$ of subsets of $\N$ we have different types of $\f -$expansive maps. For instance, if $\f=\f_t$, then $\pf -$expansivity is  positively thick expansivity or if $\f = \f_s$, then $\pf-$expansivity is positively syndetic expansivity. Further, every positively $\f -$expansive map is positively expansive. Also, if $f$ is $\pf_{cf}-$expansive then it is both $\pf_{t}-$expansive and $\pf_s-$expansive. In the following we give an example of a positively  $\f_t-$expansive map which is not $\pf_{cf}-$expansive.
		
		\medskip
		\begin{example}\label{ex1}
			An $\N-$block in $\N$ of length $m$ is a finite sequence of length $m$ with its elements from $\N$. We denote an $\N-$block of length $m$ by $t, t+1, t+2,\dots, t+m-1$. For instance, $1,2,3,4,5$ is a block of length $5$. Construct a subset $P$ of $\N$ as follows:
			Starting from $1$, put an $\N-$block of length $2$, leave a next $\N-$block of length $2$, add next an $\N-$block of length $4$, leave a next an $\N-$block of length 4. Continue like this by adding an $\N-$block of even length and then leaving an $\N-$block of same length. Therefore,  
			
			$$P=\{1, 2, 5, 6, 7, 8, 13, 14,.., 18, 25, 26,.., 32, 41, 42,.., 50, 61, 62,..,72,85,..\}$$
			
			\noindent whereas compliment of $P$, is given by 
			$$P^c=\{3, 4, 9, 10, 11, 12, 19,20,.., 24, 33, 34,.., 40, 51, 52,.., 60,..\}.$$
			
			\noindent Note that both $P$ and $P^c$ contains blocks of length $2n$, for all $n\in \N$. Therefore, both $P$ and $P^c$ are thick sets as both the sets contains blocks of every length.
			Set $b_1=2$. Define inductively $b_n$ by $b_n=b_{n-1}+n$. For instance, $b_2=4, \; b_3=7, \; b_4=11.$

			\noindent For $m \in \N$, let $B_m$ denote a first $\N-$block of length $2m$ in $P^c$. Suppose $B_m$ is given by $B_m=t, t+1, t+2,\dots, t+2m-1$. Further, let $C_m$ denote an $\N-$block of length $m+1$ in $B_m$ starting from $t$. Therefore $C_m=t, t+1, t+2,\dots,t+m$. For $r \in \N$, let $d_{2r}^{\; m}$ denote the number of $\N-$block of length $2r+1$ in $C_m$.  
			Let $2k$ be the highest integer such that $\N-$block of length $2k+1$ occurs for the first time in $C_m$ and satisfies the following three conditions:
			\begin{enumerate}
				\item $d_2^{\; m} + d_4^{\; m} + d_6^{\; m}+\dots+ d_{2k}^{\; m}=m-1$;
				\item $d_2^{\; m}>d_4^{\; m}>d_6^{\; m}>\dots>d_{2k}^{\; m}$;
				\item $d_{2l}^{\; m}\leq d_{2l}^{\; m+1}$, for all $l\geq 1$.
			\end{enumerate}
			
			\begin{center}
				\begin{figure}
					\includegraphics[scale=0.3]{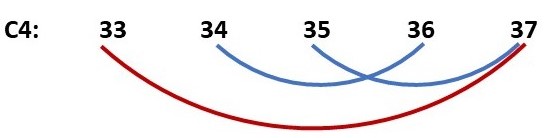} \includegraphics[scale=0.3]{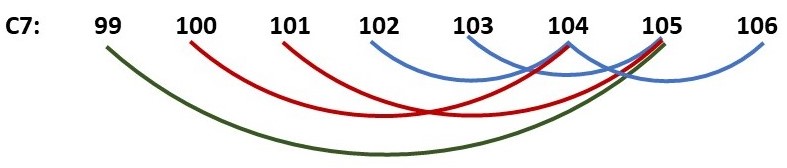}
					\caption{\lq $d_{2l}^m$ for $m=4$ and $m=7$ \rq}
				\end{figure}  
			\end{center}

			\noindent Using the Principle of Mathematical Induction we show that such a choice of $d_{2l}^{\; m}$ is possible. For $m=2$, $B_2=9,10,11,12$ and $C_2=9,10,11$. Therefore, the largest value of $2k$ is $2$ and $d_2^{\; 2}=1=2-1$ the only $\N-$block of length $2k+1=3$ is $C_2$ itself. Next, $B_3=19,20,21,22,23,24$, $C_3=19,20,21,22$ and the largest value of $2k$ is $2$. Therefore, there are exactly two $\N-$blocks of length $2k+1=3$, namely $19,20,21$ and $20,21,22$ and hence $d_2^{\; 3}=2=3-1$. Note that $d_2^{\; 2}\leq d_2^{\; 3}$ and $d_4^{\; 3}=0$. 
			
			\smallskip
			\noindent  Now, take $m=4=b_2$. Then $B_4=33,34,\dots,40$, $C_4=33,34,35,36,37$ and the largest value of $2k$ such that an $\N-$block of length $2k+1$ occurs in $C_m$ is $4$. Note that $C_4$ is an $\N-$block of length $5$ and therefore $d_4^{\; 4}=1$. Take $d_2^{\; 4}=2$. Then $d_2^{\; 4} + d_4^{\; 4}=2+1=3=4-1$. Also, $d_2^{\; 4}>d_4^{\; 4}$, $d_4^{\; 3}\leq d_4^{\; 4}$ and $d_4^{\; 3}\leq d_4^{\; 4}$.
			
			\smallskip
			\noindent Suppose that the given conditions are true for $m$. Note that for this $m$, there is $b_k$ and $r$ such that $m=b_k+r$, $0\leq r < b_{k+1}-b_k=k+1$. Here $2k$ is largest integer such that $\N-$block of length $2k+1$ occurs in $C_m$ satisfying the above three conditions. We show that conditions holds for $m+1$.  If $m+1 = b_k+r$, $0\leq r \leq k$, then take $d_{2s}^{\; m+1}=d_{2s}^{\; m}$ for $s\neq r$ and $d_{2r}^{\; m+1}=d_{2r}^{\; m}+1$. Observe that $d_{2}^{\; m+1}+d_{4}^{\; m+1}+...+d_{2k}^{\; m+1}=m-1+1=m$ and $d_{2s}^{\; m}\leq d_{2s}^{\; m+1}$, for $0\leq s \leq k$. Further, the condition $d_2^{\; m+1}>d_4^{\; m+1}>d_6^{\; m+1}>\dots>d_{2k}^{\; m+1}$ is satisfied by the choice of $d_{2s}^{\; m+1}$.

			\medskip
			\noindent Let $m+1=b_k+k+1=b_{k+1}$. Then the highest value of $2p$ such that an $\N-$block of length $2p+1$ occurs for the first time in $C_{m+1}$ is $2p=2(k+1)=2k+2$. Take $d_{2s}^{\; m+1}=d_{2s}^{\; m}$ for $0\leq s <k$ and $d_{2k+2}^{\; m+1}=1$. Note that all the three conditions are satisfied by the choice $d_{2s}^{\; m+1}$.

			\noindent Consider the shift space $(\Sigma_2, \sigma)$. Here $\Sigma_2=\left\{\bar{x}=\left(x_i\right) : x_i=0 \mbox{ or }x_i=1, \; i \in \N \right\}$ is considered with the metric $d\left(\bar{x}, \bar{y}\right)=\sum_{i=1}^{\infty}\frac{\left|x_i-y_i\right|}{2^i}$ and $\sigma : \Sigma_2\longrightarrow \Sigma_2$ is defined by $\sigma\left(\left(x_i\right)\right)=\left(y_i\right)$, where $y_i=x_{i+1}$, $i\geq 1$. We construct a point $\bar{x}=\left(x_i\right) \in \Sigma_2$ as follows: Put 
			\[ x_i = \left\{ \begin{array}{ll}
				1, & \mbox{if  $i\in P$ \& $i$ is odd }\\
				0,  & \mbox{if  $i \in P$ \& $i$ is even}\\
			\end{array}
			\right. \]

			\begin{center}
				\begin{figure}
					\hspace{-10pt} \includegraphics[scale=0.25]{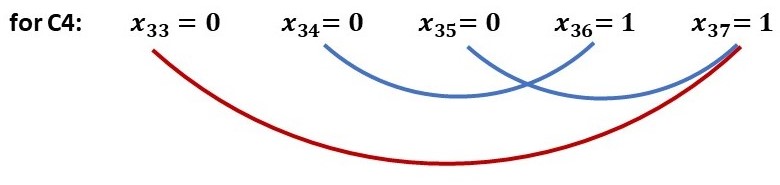}  \includegraphics[scale=0.25]{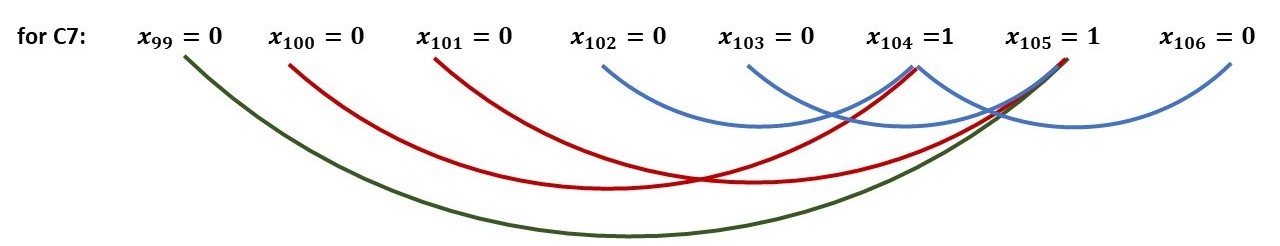}
					\caption{\lq Assignment of values of $x_i$ at different positions\rq}
				\end{figure}  
			\end{center}

			\noindent Now, let $m \in \N$ be such that $m\in P^c$ 
			and let $B_m=t, t+1, t+2,\dots, t+2m-1$. Put 
			\[ x_i = \left\{ \begin{array}{ll}
				0, & \mbox{if  $t+m+1 \leq i \leq t+2m-1$ \& $i$ is odd }\\
				1,  & \mbox{if  $t+m+1 \leq i \leq t+2m-1$ \& $i$ is even}\\
			\end{array}
			\right. \]
			
			\noindent Next, consider $C_m$ and let $2k$ be the largest integer such that an $\N-$block of length $2k+1$ occur in $C_m$ satisfying the three conditions. For $1\leq i \leq d_{2k}^{\; m}$, let $\N-$blocks of length $2k+1$ be given by $t+i-1, \; t+i, \;\dots, \;t+i+2k-1$. Put $x_{t+i-1} \neq x_{t+i+2k-1}$. Further, for $1 \leq j \leq d_{2k-2}^{\; m}$, an $\N-$block of length $2k-2+1$ is given by $t+d_{2k}^{\; m}+j-1, \; t+d_{2k}^{\; m}+j, \; \dots, \;t+d_{2k}^{\; m}+2k-2+j-1$. Put $x_{t+d_{2k}^{\; m}+j-1}\neq x_{t+d_{2k}^{\; m}+2k-2+j-1}$. Continuing this way $\N-$block of length $2\times 1 + 1$ will be given by $t+m-1-d_{2}^{\; m}+q-1, \; t+m-1-d_{2}^{\; m}+q, \; t+m-1-d_{2}^{\; m}+q+1$, $1\leq q\leq d_{2}^{\; m}$. Put $x_{t+m-1-d_{2}^{\; m}+q-1} \neq x_{t+m-1-d_{2}^{\; m}+q+1}$. Then 
			$$
			\begin{array}{lcl}
				\bar{x}&=&(10011010011010101000110110101010000111011010101010000110010 \\
				& &110101010101\dots).
			\end{array}
			$$ 
			
			\smallskip
			\noindent Take $X= O_{\sigma}(\bar{x}) \cup \left\{\overline{01}, \; \overline{10}\right\}$, where $\overline{01}=(01010101\dots)$. We show that $\sigma=\sigma_{|X}:X \longrightarrow X$ is $\pf_t-$expansive on $X$ with $\pf_t-$expansive constant $\delta$, where $0< \delta< \frac{1}{2}$. Let $\bar{t}, \; \bar{w}\; \in X$ with $\bar{t}\neq \bar{w}$.   
			
			\medskip
			\noindent \underline{Case--1:} Let $\overline{t} \neq \overline{w}$ be two elements in $X$. If $\overline{t}=\overline{01}$ and $\overline{w}=\overline{10}$, then $N\left(\overline{t}, \overline{w}, \delta\right)=\N$ and therefore it is a thick set. 
			
			\medskip
			\noindent \underline{Case--2:} Suppose $\overline{t}=\overline{01}$ and $\overline{w}=\sigma^r(x)$, for some $r\geq 0$. If $r$ is even then $N\left(\overline{t}, \overline{w}, \delta\right) \supset P\setminus \left\{1,2,\dots,r\right\}$ and therefore is a thick set. Let $m \in \N$ be such that $m\in P^c$ and let $B_m=t, t+1, t+2,\dots, t+2m-1$. Put 
			$$F=\bigcup_{m\geq 2}\left\{t+m+1, \; t+m+2,\;....\, t+2m-1\right\}.$$
			Then $F$ is a thick set. If $r$ is odd then $N\left(\overline{t}, \overline{w}, \delta\right) \supset F \setminus \{1,2,\dots,r\}$. 
			
			\medskip
			\noindent \underline{Case--3:} Take $\overline{t}=\overline{x}$ and $\overline{w}=\sigma^s(x)$, for some $s> 0$. Recall $P\subset \N$ is such that $P$ contains $\N-$block of length $2n$, for all $n\in \N$.
			
			\noindent If $s$ is odd, then $$N_{\sigma}(\overline{t},\overline{w},\delta) \supset P^{'}_{\; s} $$ where $P^{'}_{\; s}$ is a subset of $P$ obtained by removing last $s-$elements from each $\N-$block of length $2n$. For instance, if $s=3$, then $P^{'}_{\; 3}=\{5,13,14,15,25,26,\\ 27,28,29,41,\dots,47,61,\dots,70,\dots\}$. Note that $P^{'}_{\; s}$ is a thick set and therefore $N_{\sigma}(\overline{t},\overline{w},\delta)$ is thick. Next, suppose $s$ is even. Then $s=2p$ for some $p\in \mathbb{N}$. Let $C_m$ be the first $\N-$block of length $m+1$ in which a block of length $2p+1$ occurs for the first time. In this case $d_{2p}^m=1$. By definition of $d_{2p}^m$, it follows that an $\mathbb{N}-$block of length $1$ is in $N_{\sigma}(\overline{t},\overline{w},\delta)$. Consider $C_k$, $k\geq m$. Then as $k$ increases, the value of $d_{2p}^k$ will increase. Therefore if $d_{2p}^k=l$, then an $\mathbb{N}-$block of length $l$ is in $N_{\sigma}(\overline{t},\overline{w},\delta)$. Hence the set $N_{\sigma}(\overline{t},\overline{w},\delta)$ is thick. If $\overline{t}=\sigma^k(\overline{x})$ then by a similar argument as above $N_{\sigma}(\overline{t},\sigma^k(\overline{w}),\delta)$ is a thick set and $N_{\sigma}(\overline{t},\overline{w},\delta)\supset N_{\sigma}(\overline{t},\sigma^k(\overline{w}),\delta)$. Thus, in any case, the set $N_{\sigma}(\overline{t},\overline{w},\delta)$ is thick. Hence $\sigma$ is $\pf_t-$expansive. Note that in Case--3 for even $s$, compliment of $N_{\sigma}(\bar{t},\bar{w},\delta)$ contains an infinite set. Therefore $\sigma$ is not $\pf_{cf}-$expansive.
			
			\bigskip 
			\noindent In the following we give an example of $\pf_s-$expansive map.
			
			\medskip	
		\end{example}
		
		\begin{example}
			Consider the shift space  $\left(\Sigma_2, \sigma\right)$ as in Example \ref{ex1}. Let $\overline{x}$ be a periodic point of period $k$ in $\Sigma_2$.  Put $X=\overline{O_{\sigma}(x)}$ and $\sigma=\sigma_{|X}$. Then $\sigma$ is   $\pf_s-$expansive with $\f_s-$expansivity constant $\frac{1}{2}$. For if, $\overline{y}=(y_i)$ and $\overline{z}=(z_i)$ are two distinct points in $X$, then the set  $N_{\sigma}\left(y,z,\frac{1}{2}\right)=\{i\in \N:y_i\neq z_i\}$ contains a syndetic set $ k\N+m,\; 0 \leq m < k$ and therefore is a syndetic set.
		\end{example}
		
		\medskip
		\noindent Using following Lemma we obtain a condition under which a positively expansive map is $\pf_{cf}-$expansive map.
		
		\smallskip
		\begin{lemma} \cite{Chen} \label{L2}
			Suppose $(X,d)$ is a compact metric space and $f:X\to X$ is a continuous map. Given $\epsilon >0$ and $M>0$, there is $\delta >0$ such that every $\delta-$pseudo orbit $\{x_0,x_1,\dots,x_n,\dots\}$ satisfies $$d\left(f^M(x_k),x_{M+k}\right)\leq \epsilon$$ for all $k\geq 0$.
		\end{lemma}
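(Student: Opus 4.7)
The plan is to reduce the estimate to a telescoping sum and then control each term uniformly using the uniform continuity of the iterates $f, f^2, \ldots, f^{M-1}$ on the compact space $X$.

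First, I would rewrite $d(f^M(x_k), x_{M+k})$ as a telescoping sum along the pseudo-orbit:
\begin{equation*}
d\!\left(f^M(x_k), x_{M+k}\right) \;\leq\; \sum_{j=0}^{M-1} d\!\left(f^{M-j}(x_{k+j}),\, f^{M-j-1}(x_{k+j+1})\right).
\end{equation*}
The $j$-th term equals $d\!\left(f^{M-j-1}(f(x_{k+j})),\, f^{M-j-1}(x_{k+j+1})\right)$, which involves the pair $\bigl(f(x_{k+j}), x_{k+j+1}\bigr)$ known to be at distance less than $\delta$ by the pseudo-orbit hypothesis, then pushed forward by $M-j-1$ iterations of $f$.

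Next, since $X$ is compact, each iterate $f^i$ is uniformly continuous. So for each $i \in \{0, 1, \dots, M-1\}$, I would pick $\delta_i > 0$ such that $d(a,b) < \delta_i$ implies $d(f^i(a), f^i(b)) < \epsilon / M$. Setting $\delta = \min\{\delta_0, \delta_1, \dots, \delta_{M-1}\}$ and plugging into the telescoping inequality gives each of the $M$ summands bounded by $\epsilon/M$, yielding $d(f^M(x_k), x_{M+k}) \leq \epsilon$ for every $k \geq 0$, as required.

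There is no real obstacle: the argument is finite (the number of terms $M$ is fixed) and only uses compactness of $X$ through the uniform continuity of the $M$ maps $f^0, f^1, \dots, f^{M-1}$. The only subtle point worth double-checking is the constants in the telescoping, namely that the index $M-j-1$ of iteration decreases while the index of the pseudo-orbit point increases, so that one always lands on a pair $(f(x_{k+j}), x_{k+j+1})$ controlled by $\delta$; this is a bookkeeping check rather than a genuine difficulty. Note that $\delta$ depends on both $\epsilon$ and $M$ but is independent of $k$, which is precisely what the statement asserts.
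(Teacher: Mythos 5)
Your argument is correct: the telescoping decomposition $d(f^M(x_k),x_{M+k})\leq\sum_{j=0}^{M-1}d\bigl(f^{M-j-1}(f(x_{k+j})),f^{M-j-1}(x_{k+j+1})\bigr)$ together with uniform continuity of the finitely many iterates $f^0,\dots,f^{M-1}$ is exactly the standard proof of this lemma. The paper itself gives no proof, importing the statement from the cited reference, and your write-up supplies the missing details faithfully; the bookkeeping on the indices checks out.
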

		\begin{theorem}
			Let $X$ be a compact metric space. Suppose $f:X\longrightarrow X$ a positively expansive map. If $f$ is a chain mixing, then $f$ is $\pf_{cf}-$expansive map.
		\end{theorem}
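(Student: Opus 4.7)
The plan is to argue by contradiction. Let $e>0$ be the positive expansive constant of $f$ and set $e' = e/3$ as a candidate $\pf_{cf}$-expansive constant. Assume for contradiction that some distinct $x, y \in X$ admit infinitely many $n$'s with $d(f^n(x), f^n(y))\leq e'$; extract an increasing sequence $n_1<n_2<\cdots$ realising this and, via compactness of $X$, ensure $f^{n_k}(x)\to a$, $f^{n_k}(y)\to b$ with $d(a,b)\leq e'$. When $a=b$ I further pass to a subsequence so that $d(f^{n_k}(x), f^{n_k}(y))$ tends to zero arbitrarily fast.

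Given a target integer $M$, I apply Lemma \ref{L2} with $\epsilon = e'$ and this $M$ to obtain $\delta = \delta(e', M)>0$; chain mixing of $f$ then provides $N_0$ so that any two points of $X$ are joined by $\delta$-chains of every length $\geq N_0$. Using this, I construct a $\delta$-pseudo orbit $(z_i)$ of $f$ by starting at $z_0=x$, following the forward orbit of $x$, and at a time $n_k$ (with $k$ large enough that $d(f^{n_k+1}(x), f^{n_k+1}(y))<\delta$; this holds for all large $k$ when $a=b$ by uniform continuity of $f$, and in general can be arranged by inserting a bridging $\delta$-chain supplied by chain mixing) switching to the forward orbit of $y$. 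Applying Lemma \ref{L2} to this pseudo orbit with varying starting indices $k'\geq 0$ then gives $d(f^j(x), f^j(y))\leq e'$ for every index $j$ lying in an interval of length $M$ immediately after the switch.

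By iterating this construction over successively larger $M$'s, and combining it with chain-mixing bridges inserted to cover any gaps in the sequence $(n_k)$ that happen to exceed a single $M$-window, the estimate $d(f^j(x), f^j(y))\leq e'$ is forced for every sufficiently large $j$. This directly contradicts positive expansivity in the paper's sense, which (by the paragraph following Example \ref{eg1}) guarantees that $\{j : d(f^j(x), f^j(y))>e\}$ is infinite and hence contains arbitrarily large $j$. The main obstacle is the careful coordination of the shrinking tolerance $\delta(e', M)$ from Lemma \ref{L2}, the possibly fast growth of $n_k$, and the lengths of bridging $\delta$-chains, so that the iterated pseudo orbit construction genuinely covers every sufficiently large $j$ rather than only a sparse subsequence. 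Chain mixing is essential in resolving this: it keeps the pseudo orbit construction flexible by supplying bridging chains of every length $\geq N_0$ between any pair of points, allowing the close-up moments $n_k$ to be effectively linked regardless of how widely separated they are.
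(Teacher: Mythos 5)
Your proposal inverts the logic of the paper's argument, and the inversion is where it breaks. The paper argues directly: given one separation time $k$ with $d(f^k(x),f^k(y))>\delta$, it uses chain mixing to produce $\theta$-chains of length $M$ from $f^k(x)$ to $f^k(y)$ and from $f^k(y)$ to $f^k(x)$, applies Lemma \ref{L2} to get $d(f^{M+k}(x),f^k(y))<\frac{\delta}{4}$ and $d(f^{M+k}(y),f^k(x))<\frac{\delta}{4}$, and then the triangle inequality transports the separation at time $k$ to separation $>\frac{\delta}{2}$ at time $M+k$; only one later time needs to be reached at a time. You instead try to propagate \emph{closeness} from the infinitely many assumed close times $n_k$ to \emph{every} sufficiently large $j$, which requires covering a cofinite subset of $\N$ by windows, and that coverage is exactly what you cannot secure.

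Concretely, two steps fail. First, Lemma \ref{L2} fixes $M$ before it hands you $\delta$, so each admissible switch time $n_k$ (one where $d(f^{n_k+1}(x),f^{n_k+1}(y))<\delta$) yields closeness only on the window $(n_k,\, n_k+M]$. The gaps $n_{k+1}-n_k$ are completely uncontrolled --- the negation of cofiniteness gives an infinite set of close times, not a syndetic one --- so these windows need not cover all large $j$; and enlarging $M$ shrinks $\delta$, which thins out the admissible $n_k$ and widens the gaps rather than closing them. Second, the ``bridging $\delta$-chain'' device cannot repair this: a bridge of length $L$ inserted between the orbit of $x$ and the orbit of $y$ shifts the indices of the tail, so Lemma \ref{L2} then compares $f^{M+i}(x)$ with an iterate of $y$ offset by $L$ rather than with $f^{M+i}(y)$, and in any case the bridge consists of auxiliary points lying on neither orbit, so it produces no estimate on $d(f^j(x),f^j(y))$ inside the gap. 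Since chain mixing enters your argument only through this bridging step, what remains is the claim that for a positively expansive map infinitely many $e'$-close times force all large times to be $e'$-close; the system of Example \ref{ex1}, which is positively thick expansive but not $\pf_{cf}$-expansive (so some pair has a thick set of separation times and infinitely many close times), shows such a claim cannot hold unless chain mixing does real quantitative work. You should instead run the forward argument: fix one separation time and use the two chains together with Lemma \ref{L2} to manufacture separation at later times directly.
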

		\begin{proof}
			Let $\delta>0$ be an expansivity constant for $f$ and let $x,\; y\in X$ with $x\neq y$. Then $\xy \neq \Phi$ and therefore there exists $k\in \xy$ such that $d\left(f^k(x),f^k(y)\right)>\delta$. Take $\epsilon$ such that $0<\epsilon <\delta.$
			Since $f$ is chain mixing, it follows that there is $n_0\in \N$ satisfying definition of chain mixing. Fix $M$ such that $M\geq n_0$. Then by Lemma \ref{L2}, for $\delta>0$ there exists $\beta >0$ such that every $\beta-$pseudo orbit $\{t_i\}$ satisfies 
			\begin{equation}\label{eq6}
				d\left(f^M(t_i),t_{M+i}\right)<\frac{\delta}{4}, \mbox{ for all }i\geq 0.
			\end{equation}
			Take $\theta$ such that $0<\theta \leq min\{\epsilon, \beta\}$. Then by chain mixing of $f$ there exists $\theta-$chain of length $M$ from $f^k(x)$ to $f^k(y)$ and vice-versa. Let these chains be given by $\{x_i\}=\{f^k(x)=x_0,x_1,\dots, x_M=f^k(y)\}$ and $\{y_i\}=\{f^k(y)=y_0,y_1,\dots, y_M=f^k(x)\}$. Using Equation \ref{eq6} for $i=0$ implies $d\left(f^{M+k}(x)\right),f^k(y))<\frac{\delta}{4}$ and $d\left(f^{M+k}(y)),f^k(x)\right)<\frac{\delta}{4} $. Therefore $$d\left(f^{M+k}(y),f^{M+k}(x)\right)>\delta-\frac{\delta}{4}-\frac{\delta}{4}=\frac{\delta}{2}$$ and hence $M+k\in N_f(x,y,\frac{\delta}{2})$. Infact, $N_f(x,y,\frac{\delta}{2})=\{M+k,M+1+k,\dots\}$ which is a co--finite set. Hence $f$ is $\pf_{cf}-$expansive map with $\f_{cf}-$expansivity constant $\frac{\delta}{2}$.		
		\end{proof}
		
		\smallskip 
		\noindent In the following theorem we obtain a condition under which a positively expansive map is a positively syndetic expansive map. Recall, if $x$ is a minimal point of $f$ and $U$ is any neighbourhood of $x$, then $N_f(x,U)\in \f_s$.
		
		\smallskip
		\begin{theorem}
			Let $X$ be a compact metric space. Suppose $f:X\longrightarrow X$ is positively expansive map such that the set of minimal points of $f$ is dense in $X$. Then $f$ is $\pf_s-$expansive map.
		\end{theorem}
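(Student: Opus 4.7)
The plan is to use a scaled version of the positive expansivity constant $\delta > 0$ of $f$ as an $\mathscr{F}_s$-expansivity constant. Fix distinct $x, y \in X$. Positive expansivity yields some $k \in \N$ with $d(f^k(x), f^k(y)) > \delta$; write $u = f^k(x)$ and $v = f^k(y)$. Using the density of minimal points, I would select a minimal point $p \in B(u, \delta/4)$, so that $d(p, v) > 3\delta/4$.

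The strategy is then to invoke the fact recalled immediately before the theorem: for every neighbourhood $U$ of the minimal point $p$, the set $N_f(p, U)$ is syndetic. Taking $U = B(p, \delta/8)$ produces a syndetic $S \subseteq \N$ along which $f^n(p)$ stays within $\delta/8$ of $p$, and hence within $3\delta/8$ of $u$. If one can further show that for $n$ in a syndetic subset of $S$ the orbit point $f^n(v)$ stays far from $f^n(u)$, then this yields a syndetic subset of $N_f(u, v, \delta/4)$; translating by $k$ (a translate of a syndetic set is syndetic) then gives $N_f(x, y, \delta/4) \in \mathscr{F}_s$, as required.

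The main obstacle is the transfer step: positive expansivity tends to drive nearby orbits apart, so one cannot directly relate $f^n(u)$ to $f^n(p)$ along $S$. To circumvent this I would pass to the product system $f \times f$ on $X \times X$, choose a minimal subset of $\overline{O_{f \times f}(u, v)}$ and a pair $(p', q')$ inside it, and then use density of minimal points of $f$ together with positive expansivity to rule out $p' = q'$; otherwise the pair $(u, v)$ would approach the diagonal, which should clash with expansivity applied to suitably chosen approximating minimal points. Once $p' \neq q'$ is secured, minimality of $(p', q')$ for $f \times f$ gives a syndetic set of returns to a product neighbourhood on which $d(f^n(u), f^n(v))$ stays above $d(p', q')/2 > 0$. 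The delicate final point will be to extract a separation constant \emph{uniform} in the pair $(x, y)$, which should ultimately be dictated by the original expansivity constant $\delta$ rather than by the particular minimal pair $(p', q')$ produced.
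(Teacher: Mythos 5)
Your reduction to the pair $(u,v)=(f^k(x),f^k(y))$, the plan to produce a syndetic subset of $N_f(u,v,\cdot)$ and translate it by $k$, and the instinct to pass to the product system $f\times f$ all match the opening of the paper's proof. The gap is in the mechanism you propose inside the product system. Choosing a minimal subset $M\subset\overline{O_{f\times f}(u,v)}$ and a point $(p',q')\in M$ only gives syndetic return times of $(p',q')$ to its \emph{own} neighbourhoods; since $(u,v)$ need not be a minimal point, this says nothing about the set of times at which the orbit of $(u,v)$ visits a neighbourhood of $(p',q')$, and that visit set can fail to be syndetic (the orbit of $(u,v)$ may spend arbitrarily long stretches near a different minimal subset of its orbit closure). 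Moreover, you cannot in general rule out $p'=q'$: for a positively expansive map the quantity $d\left(f^n(u),f^n(v)\right)$ can have $\liminf$ equal to $0$ (for the doubling map on the circle take $u-v$ with arbitrarily long blocks of zeros in its binary expansion), so $\overline{O_{f\times f}(u,v)}$ may meet the diagonal and may contain diagonal minimal sets; density of minimal points of $f$ together with expansivity does not exclude this, contrary to what you suggest.

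The ingredient that closes this gap in the paper is different: by a result of Akin and Glasner, density of minimal points of $f$ in $X$ forces density of minimal points of $f\times f$ in $X\times X$. When $(u,v)$ is not itself minimal for $f\times f$, one approximates it by an off-diagonal minimal pair $(x_m,y_m)$ of the product system, uses uniform continuity to show $N_f(x_m,y_m,\delta)\subset N_f(x,y,\eta)$ for a suitable $\eta$, and then applies the neighbourhood-return argument to the genuinely minimal pair $(x_m,y_m)$, for which the return set really is syndetic. Your closing worry about extracting a separation constant uniform in $(x,y)$ is legitimate (and is a delicate point in the paper's own argument as well), but the essential missing idea in your proposal is the density of minimal points in the product system, which lets you work with a minimal pair \emph{close to} $(u,v)$ rather than with a minimal point merely lying in its orbit closure.
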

		\begin{proof}
			Let $\delta>0$ be an expansivity constant for $f$ and let $x,\; y\in X$ with $x\neq y$. Positive expansivity of $f$ implies $\xy \neq \Phi$. Suppose $k\in \xy$. Then $d\left(f^k(x),f^k(y)\right)>\delta$. Since minimal points of $f$ are dense in $X$, minimal points of $f\times f$ are also dense in $X\times X$ \cite{EA}. We consider the following cases:
			
			\medskip
			\noindent \underline{Case--1:} Suppose $\left(f^k(x),f^k(y)\right)$ is a minimal point of $f\times f$. Let $U$ and $V$ be neighbourhoods of $f^k(x)$ and $f^k(y)$ respectively, such that $d(u,v)>\delta$ for every $u\in U$ and for every $v\in V$. Then, $U\times V$ is a neighbourhood of $\left(f^k(x),f^k(y)\right)$. Therefore the set $N_{f\times f}\left((f^k(x),f^k(y)),U\times V\right)\in \f_s$. Let $n\in N_{f\times f}\left((f^k(x),f^k(y)),U\times V\right)$. Then $d\left(f^{n+k}(x),f^{n+k}(y)\right)>\delta$ implies $N_{f\times f}\left((f^k(x),f^k(y)),U\times V\right)+k\subset \xy$. Hence $\xy\in \f_s$.
			
			\medskip
			\noindent \underline{Case--2:} Suppose $\left(f^k(x),f^k(y)\right)$ is not a minimal point of $f\times f$. By uniform continuity of $f^k$, there exists $\eta>0$ such that 
			\begin{equation*} 
				d(p,q)<\eta \implies d\left(f^k(p),f^k(q)\right)<\frac{\delta}{3}.	
			\end{equation*}
			Put $f^k(x)=a$ and $f^k(y)=b$. Then $a\neq b$ and there exists a sequence $\{(x_n,y_n)\}$ of minimal points of $f\times f$ converging to $(a,b)$. We show that for $m$ large enough $N_f(x_m,y_m,\delta)\subset N_f(x,y,\eta)$. If possible suppose $n\notin N_f(x,y,\eta)$. Then $d\left(f^n(x),f^n(y)\right)<\eta$ implies that $d\left(f^{k+n}(x),f^{k+n}(y)\right)<\frac{\delta}{3}$.
			
			\smallskip
			\noindent By uniform continuity of $f^n$ there exists $\theta$ with $0<\theta<\frac{\delta}{3}$ such that 
			$$d(p,q)<\theta \implies d\left(f^{n}(p),f^{n}(q)\right)<\frac{\delta}{3}.$$
			
			\smallskip 
			\noindent Since $x_n\to a$ and $y_n\to b$, as $n\to \infty$, it follows that there exists $m\in \N$ such that $d(x_m,a)<\theta$ and $d(y_m,b)<\theta$ and $x_m\neq y_m$. It is now easy to observe that $d\left(f^{n}(x_m),f^{n}(y_m)\right)<\delta$. Therefore $n\notin N_f(x_m,y_m,\delta)$. Hence, $N_f(x_m,y_m,\delta)\subset N_f(x,y,\eta)$.
			
			\smallskip
			\noindent Again $x_m\neq y_m$ and $f$ is positively expansive implies $N_f(x_m,y_m,\delta)\neq \Phi$. Suppose $l\in N_f(x_m,y_m,\delta)$. Since $(x_m,y_m)$ is minimal point, it follows that $\left(f^l(x_m),f^l(y_m)\right)$ is a minimal point and hence $N_f(x_m,y_m,\eta)\in \f_s$ by Case--1. Thus, $N_f(x,y,\eta)\in \f_s$. 
			
			\smallskip 
			\noindent Let $\beta=min\{\delta,\eta\}$. Then, $f$ is $\pf_s-$expansive with expansivity constant $\beta$.
		\end{proof}

		\medskip
		\noindent $\pf-$expansivity is preserved under the conjugacy if it is uniformly continuous as can be observed from the following result.
		
		\smallskip
		\begin{proposition}\label{Th5}
			Let $(X,d)$ and $(Y,\rho)$ be two metric spaces and $g :X \longrightarrow Y$ be a homeomorphism such that $g^{-1}$ is uniformly continuous on $Y$. Suppose $f$  is a $\pf-$expansive map on $X$. Then $gfg^{-1}$ is a $\pf-$expansive map on $Y$.
		\end{proposition}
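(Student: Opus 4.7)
The plan is to transport the $\pf-$expansive constant $\delta$ for $f$ back to $Y$ through the uniform continuity of $g^{-1}$. Write $h = gfg^{-1}$; an easy induction yields $h^n = gf^n g^{-1}$, so that $g^{-1}(h^n(u)) = f^n(g^{-1}(u))$ for every $u \in Y$ and every $n \geq 0$. This identity is the one fact I will use repeatedly to push separation estimates between $X$ and $Y$.

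First I would invoke uniform continuity of $g^{-1}$ to choose, for the given $\delta > 0$, a constant $\eta > 0$ such that $\rho(u,v) \leq \eta$ forces $d(g^{-1}(u), g^{-1}(v)) \leq \delta$. The useful contrapositive says that whenever $d(g^{-1}(u), g^{-1}(v)) > \delta$, one must have $\rho(u,v) > \eta$. This $\eta$ is my candidate $\f-$expansive constant on $Y$.

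Next, for two distinct $u, v \in Y$, set $x = g^{-1}(u)$ and $y = g^{-1}(v)$; the injectivity of $g$ forces $x \neq y$, so the hypothesis on $f$ gives $N_f(x,y,\delta) \in \f$. For every $n$ in this set, $d(f^n(x), f^n(y)) > \delta$, and the identity $g^{-1}(h^n(u)) = f^n(x)$ (applied likewise to $v$) combined with the contrapositive from the previous step yields $\rho(h^n(u), h^n(v)) > \eta$. Consequently $N_f(x,y,\delta) \subseteq N_h(u,v,\eta)$.

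Because $\f$ is upward hereditary by the very definition of a family, any superset of a member of $\f$ again lies in $\f$; in particular $N_h(u,v,\eta) \in \f$, so $h$ is $\pf-$expansive with constant $\eta$. There is no genuine obstacle here: the argument is essentially bookkeeping, and the only hypothesis doing real work is the uniform continuity of $g^{-1}$, which is precisely what lets us pull the separation threshold $\delta$ back across $g$. Note that the symmetric hypothesis (uniform continuity of $g$) is \emph{not} required, because $f$-separation is transported to $h$-separation and not the reverse.
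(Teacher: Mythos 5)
Your proof is correct and follows essentially the same route as the paper's: both transport the constant $\delta$ via the (contrapositive of the) uniform continuity of $g^{-1}$, use $(gfg^{-1})^n = gf^ng^{-1}$ to identify iterates, and conclude $N_f\left(g^{-1}(u),g^{-1}(v),\delta\right)\subseteq N_h(u,v,\eta)$ so that upward heredity of $\f$ finishes the argument. No substantive differences to report.
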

		\begin{proof}
			Let $\delta$ be an $\f-$expansivity constant for $f$. Then uniform continuity of $g^{-1}$ on $Y$ implies there exists $\epsilon>0$ such that 
			\begin{equation}\label{eq8} 
				d(w,t)\geq \delta \implies \rho \left(g(t), g(w)\right)\geq \epsilon.
			\end{equation}
			Let $\eta$ be such that $0< \eta < \epsilon$. We show that $h=gfg^{-1}$ is $\pf-$expansive with $\f-$expansivity constant $\eta$. If $x$ and $y$ are distinct points of $Y$ then $g^{-1}(x)$ and $g^{-1}(y)$ are distinct points of $X$. Therefore $\pf-$expansivity of $f$ implies $N_f\left(g^{-1}(x), \; g^{-1}(y), \;\delta\right)\in \f$. Take $n \in N_f\left(g^{-1}(x), \; g^{-1}(y), \;\delta\right)$. Then by Equation \ref{eq8} it follows that $n \in N_{h}(x,\; y,\;\eta)$. Therefore  $N_f\left(g^{-1}(x),\;g^{-1}(y), \;\delta \right) \\ \subset N_h(x,\; y,\; \eta)$ as for any $k\in \N$, $(gfg^{-1})^k=gf^kg^{-1}$. Hence $N_h(x, \; y, \; \eta) \in \f$.
		\end{proof}
		
		\medskip
		\noindent In the Theorem \ref{Th5}, uniform continuity of $g^{-1}$ is a necessary condition can be observed from following Example. Recall, there does not exists any positively expansive map on $[0, 1)$, where $[0,1)$ is considered with the usual metric. Therefore, there does not exists $\pf-$expansive map on $[0,1)$.
		
		\smallskip
		\begin{example}
			Consider $X=[0, \infty)$ with usual metric. Define $f:X \longrightarrow X$ by $f(x)=2x$. Then $f$ is a $\pf-$expansive map with $\pf-$expansive constant $\delta$, $0 < \delta < 1$. Define $g: [0,\infty) \longrightarrow [0,1)$ by $g(x)=\frac{x}{x+1}$. Then $g$ is a homeomorphism and $h=gfg^{-1} : [0,1) \longrightarrow [0,1)$ given by $h(y)=\frac{2y}{y+1}$ is not $\pf-$expansive as $g$ is not positively expansive. Note that $g^{-1}$ is not uniformly continuous. 
		\end{example}
		
		\medskip 
		\noindent In the following Theorem we prove that product of two (and hence finite) $\pf-$expansive maps is again  $\pf-$expansive and also converse holds.
		\begin{proposition}
			Product of two $\pf-$expansive maps is $\pf-$expansive if and only if each map is $\pf-$expansive. 
		\end{proposition}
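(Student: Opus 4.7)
My plan is to work with the product metric $D\bigl((a,b),(c,d)\bigr)=\max\{d_X(a,c),d_Y(b,d)\}$ on $X\times Y$ (any equivalent product metric would do with trivial constant adjustments), and to exploit the fact that, by definition, a family $\f$ is upward hereditary: if $B\in\f$ and $B\subset C$, then $C\in\f$.

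For the forward implication, assume $f:X\to X$ is $\pf$-expansive with constant $\delta_1$ and $g:Y\to Y$ is $\pf$-expansive with constant $\delta_2$; I set $\delta=\min\{\delta_1,\delta_2\}$. For distinct points $(x_1,y_1)\neq(x_2,y_2)$ in $X\times Y$, at least one of the coordinates differs. If $x_1\neq x_2$, then $N_f(x_1,x_2,\delta_1)\in\f$, and for any $n$ in this set the separation in the first coordinate forces $D\bigl((f\times g)^n(x_1,y_1),(f\times g)^n(x_2,y_2)\bigr)>\delta_1\geq\delta$. Hence $N_f(x_1,x_2,\delta_1)\subseteq N_{f\times g}\bigl((x_1,y_1),(x_2,y_2),\delta\bigr)$, and upward hereditariness of $\f$ gives membership in $\f$. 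The case $y_1\neq y_2$ is symmetric via $\delta_2$.

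For the converse, assume $f\times g$ is $\pf$-expansive with constant $\delta$. Fix any $y_0\in Y$. For distinct $x_1,x_2\in X$, the points $(x_1,y_0)$ and $(x_2,y_0)$ are distinct in $X\times Y$, and by construction
\begin{equation*}
D\bigl((f\times g)^n(x_1,y_0),(f\times g)^n(x_2,y_0)\bigr)=\max\bigl\{d_X(f^n(x_1),f^n(x_2)),\,0\bigr\}=d_X(f^n(x_1),f^n(x_2)).
\end{equation*}
Therefore $N_f(x_1,x_2,\delta)=N_{f\times g}\bigl((x_1,y_0),(x_2,y_0),\delta\bigr)\in\f$, showing $f$ is $\pf$-expansive with constant $\delta$. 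The same argument with a fixed $x_0\in X$ yields $\pf$-expansivity of $g$.

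There is no serious obstacle; the only point to be careful with is the choice of product metric and ensuring that the strict inequality in the definition of $N_f$ is preserved under the two embeddings. Using the $\max$ metric makes both directions transparent, and the structural role of upward hereditariness is precisely what lets a single-coordinate separation certify separation of the pair.
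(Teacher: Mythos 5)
Your proof is correct and follows essentially the same route as the paper's: the max product metric, a single-coordinate separation certifying separation of the pair via upward hereditariness for the forward direction, and embedding $X$ as a slice $X\times\{y_0\}$ for the converse. The only cosmetic difference is that you take $\delta=\min\{\delta_1,\delta_2\}$ where the paper takes a strictly smaller constant; both work since the inequalities defining $N_f$ are strict.
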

		\begin{proof}
			Let $(X,d)$ and $(Y, \rho)$ be two metric spaces and $f:X \longrightarrow X$ and $g:Y \longrightarrow Y$ be $\pf-$expansive maps with respective  $\f-$expansivity constants $\delta$ and $\eta$. For $x=(x_1,x_2)$ and $y=(y_1,y_2)$ in $X\times Y$, $D(x,y)$ is given by 
			$$D(x,y)=max\left\{d(x_1,y_1),\rho(x_2,y_2)\right\}.$$ 
			Let $h=f \times g$. We will show that $h$ is $\pf-$expansive on $X\times Y$ with $\f-$expansive constant $\beta$, where $0< \beta < min\{\delta, \eta\}$. Let $x=(x_1,x_2)$ and 
			$y=(y_1,y_2)$ be two distinct points in $X\times Y$. Then $x_i\neq y_i$, for some $i \in \{1, 2\}$. Suppose $x_1\neq y_1$. Then $\pf-$expansivity of $f$ implies $N_{f}(x_1,y_1,\delta) \in \f$. It is easy to observe that $N_{f}\left(x_1,y_1,\delta\right) \subset N_h\left(x,y,\beta\right)$ and therefore $N_h\left(x,y,\beta\right) \in \f$. Hence $h$ is $\pf-$expansive. 
			
			\smallskip
			\noindent Conversely, suppose $h=f \times g$ is $\pf-$expansive with $\f-$expansivity constant $\delta$. Let $x_1, \; y_1 \in X$ such that $x_1\neq y_1$. For $x_2\in Y$, let $x=(x_1,x_2), \; y=(y_1,x_2) \in X\times Y$. Then $x\neq y$. Since $h$ is $\pf-$expansive it follows that $N_h(x,y,\delta)\in \f$. It is easy to observe that $N_{h}\left(x,y,\delta\right) \subset N_f\left(x_1,y_1,\delta\right)$ and therefore $N_f\left(x_1,y_1,\delta\right) \in \f$. Hence $f$ is $\pf-$expansive. Similarly, one can show that $g$ is $\pf-$exapansive. \end{proof}

		\medskip 
		\noindent In \cite{Bry}, Bryant showed that if $A$ is subset of $X$ such that $X\setminus A$ is finite and $f$ is a homeomorphism on $X$ such that $f$ is expansive on $A$, then it is expansive on $X$. In the following result, we obtain similar results for $\pf-$expansive maps.
		
		\smallskip
		\begin{theorem}
			Let $X$ be a metric space and $f:X \longrightarrow X$ be a continuous map. Suppose $A$ is a subset of $X$ such that $X\setminus A$ is finite. If $f$  is $\pf-$expansive on $A$, then  $f$ is $\pf-$expansive on $X$.
		\end{theorem}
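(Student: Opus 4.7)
The plan is to take as the $p\f$-expansivity constant on $X$ a sufficiently small positive number $\delta$ derived from the constant $\delta_0$ given on $A$, and to verify the defining property pair by pair, using the finiteness of $M := X \setminus A = \{z_1,\dots,z_k\}$ to dispose of the exceptional pairs. I would fix $\delta$ with $\delta \le \delta_0$ and, when $|M| \ge 2$, also $2\delta < \min_{i\ne j} d(z_i, z_j)$; this minimum is well-defined and strictly positive precisely because $M$ is finite.

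Given distinct $x, y \in X$, I would split on whether they lie in $A$ or in $M$. When both points lie in $A$, the hypothesis together with upward heredity of $\f$ gives $N_f(x, y, \delta) \supseteq N_f(x, y, \delta_0) \in \f$ immediately, which is the easy case.

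The substantive case is when at least one of $x, y$ lies in $M$. Here I would track the forward orbits of $x$ and $y$ against the finite set $M$. If the two orbits eventually land in $A$ at some first common iterate $n_0$ with $f^{n_0}(x) \ne f^{n_0}(y)$, then applying the hypothesis to the $A$-pair $(f^{n_0}(x), f^{n_0}(y))$ gives $N_f(f^{n_0}(x), f^{n_0}(y), \delta_0) \in \f$; translating indices by $n_0$ then produces a subset of $N_f(x, y, \delta)$ lying in $\f$, using the translation invariance of the families of primary interest in the paper ($\f_{cf}, \f_t, \f_s$). If instead one of the orbits remains trapped in the finite set $M$ from some point onward, then $(f^n(x), f^n(y))$ has only finitely many possible values, and the constraint $2\delta < \min_{i \ne j} d(z_i, z_j)$ forces $d(f^n(x), f^n(y)) > \delta$ for all large $n$ provided the two orbits do not coincide, again producing a cofinite subset of $N_f(x, y, \delta)$ lying in $\f$.

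The main obstacle I anticipate is the ``orbit-collision'' problem in the second sub-case: one must rule out $f^n(x) = f^n(y)$ for all but finitely many $n$, which would destroy $p\f$-expansivity. The natural approach is a pull-back argument: a collision would, after pulling back by finitely many iterates, have to be compared with pairs already in $A$, and the $p\f$-expansivity hypothesis on $A$ can then be invoked to derive a contradiction. A secondary subtlety is that the index-shift step uses translation invariance of $\f$, which is immediate for the standard families in the paper but in a fully abstract setting would require an additional assumption.
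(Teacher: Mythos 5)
Your argument has genuine gaps in its main case, and it also proves a weaker statement than the theorem. (i) The two sub-cases are not exhaustive: nothing rules out orbits that visit $M=X\setminus A$ infinitely often but out of phase, so that there is never a common iterate $n_0$ at which both $f^{n_0}(x)$ and $f^{n_0}(y)$ lie in $A$, while neither orbit is eventually trapped in $M$. (ii) Sub-case (b) fails as stated: if only one of the two orbits is eventually trapped in $M$, the pair $\left(f^n(x),f^n(y)\right)$ does \emph{not} range over a finite set, and your constraint $2\delta<\min_{i\neq j}d(z_i,z_j)$ controls only distances \emph{within} $M$; it gives no lower bound on $d\left(f^n(x),f^n(y)\right)$ when $f^n(x)\in M$ and $f^n(y)\in A$, since points of $A$ may accumulate at points of $M$. (iii) The index shift $N_f\left(f^{n_0}(x),f^{n_0}(y),\delta_0\right)+n_0\subseteq N_f(x,y,\delta)$ yields membership in $\f$ only if $\f$ is translation invariant, which is not among the hypotheses: the theorem is stated for an arbitrary upward hereditary family. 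You flag this yourself, but it means the proposal does not establish the stated result. (iv) The orbit-collision problem is correctly identified, but the proposed ``pull-back'' resolution is only a sketch, and for non-injective $f$ it is the genuinely delicate point.

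The paper's proof works by a different mechanism that avoids all of these issues. It reduces to adjoining a single point $x$ to $A$, and observes that at most one $y\in A$ can satisfy $d\left(f^m(x),f^m(y)\right)\leq\frac{\delta}{2}$ for all $m\in\N$ (two such points would remain $\delta$-close to each other at every iterate, contradicting $\pf-$expansivity on $A$). For any other $p\in A$, the triangle inequality gives $N_f(p,y,\delta)\subseteq N_f\left(p,x,\frac{\delta}{2}\right)$ --- the \emph{same} index set, with no shift --- so upward heredity alone places $N_f(p,x,\eta)$ in $\f$. This is the idea your proposal is missing: compare $x$ against its unique ``shadow'' $y$ in $A$ and transfer separation times directly from the pair $(p,y)$, rather than tracking when the orbits of $x$ and $y$ enter or leave $A$. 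To salvage your approach you would at minimum have to restrict to translation-invariant families and supply a correct treatment of orbits meeting $M$ infinitely often, at which point you would likely be reconstructing the shadow-point argument anyway.
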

		\begin{proof}
			Let $\delta$ be an $\f-$expansive constant for $f$ on $A$ and let $x \in X\setminus A$.  It is sufficient to show that $f$ is $\pf-$expansive on $A\cup \{x\}$. Let $a,\; b$ be any two distinct points in $A\cup \{x\}$. If $a,\; b\in A$ then we are through as $f$ is $\pf-$expansive on $A$. Suppose $b=x$. Then we show that there is at most one $y\in A$ such that $$d\left(f^m(x),f^m(y)\right)\leq \frac{\delta}{2}, \mbox{ for all } m\in \N.$$
			If possible suppose there are two such points $y$ and $z$ in $A$, then by triangle inequality
			$$ d\left(f^m(y),f^m(z)\right) \leq \delta	\mbox{ for all } m\in \N,$$
			which is a contradiction to $\pf-$expansivity of $f$ on $A$.
			
			\medskip
			\noindent Let $y$ be a point as described above and let $\eta$ be such that $0<\eta<d(x,y)\leq \frac{\delta}{2}$. Take $p\in A$ such that 
			$p\neq y$. Then $N_f(y,p,\delta)\in \f$. Further, for $k\in N_f(y,p,\delta)$,
			it is easy to observe that $d\left(f^k(p),f^k(x)\right)\geq \frac{\delta}{2} > \eta$ and therefore $k\in N_f(p,x,\eta) $. But this implies, $N_f(p,x,\eta)\in \f$. Next, if there is no such $y$, then $d\left(f^m(x),f^m(z)\right)\geq \frac{\delta}{2} > \eta$, for all $z\in A$ and $m\in \N$. In particular, for $z=p$, $N_f(p,x,\eta)=\N$. Thus, in any case $N_f(p,x,\eta)\in \mathscr{F}$. So, $\eta$ is an $\f-$expansivity constant for $f$ on $A\cup \{x\} $. 
		\end{proof}
		
		\smallskip
		\noindent In the following we relate positively thick expansivity of $f$ and its powers $f^n$. Note that by a similar argument one can relate positively syndetic expansivity of $f$ and $f^n$.
		
		\smallskip
		\begin{theorem} \label{T1}
			Let $X$ be a metric space and let $f:X \longrightarrow X$ be a uniformly continuous map. Then $f$ is $\pf_t-$expansive if and only if $f^m$ is $\pf_t-$expansive for $m\in \N$.
		\end{theorem}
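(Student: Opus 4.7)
The plan is to prove both implications directly from the definition, using the fact that a thick set contains blocks of consecutive integers of every length. The forward direction will not need uniform continuity; the reverse will.

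For the forward direction (``$f$ is $\pf_t$-expansive $\Rightarrow f^m$ is $\pf_t$-expansive''), let $\delta$ be a $\pf_t$-expansive constant for $f$, and fix distinct $x,y\in X$. I claim the same $\delta$ works for $f^m$. Given $L\in\N$, thickness of $N_f(x,y,\delta)$ provides a block of $mL$ consecutive integers inside it, say $\{k,k+1,\dots,k+mL-1\}$. An elementary counting argument (depending only on $k\bmod m$) shows that any block of $mL$ consecutive integers contains exactly $L$ consecutive multiples of $m$, call them $m q,m(q+1),\dots,m(q+L-1)$. Then $q,q+1,\dots,q+L-1\in N_{f^m}(x,y,\delta)$, giving a block of length $L$. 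Since $L$ was arbitrary, $N_{f^m}(x,y,\delta)\in\f_t$.

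For the reverse direction, let $\delta'$ be a $\pf_t$-expansive constant for $f^m$. Uniform continuity of $f$ (and hence of each $f^i$ for $0\le i\le m-1$) yields $\delta>0$ with $\delta\le\delta'$ such that
\[
d(a,b)\le\delta \ \Longrightarrow\ d(f^i(a),f^i(b))\le\delta' \quad \text{for every } i=0,1,\dots,m-1.
\]
Fix distinct $x,y\in X$ and $L\in\N$. Choose a block $\{k,k+1,\dots,k+L-1\}$ contained in $N_{f^m}(x,y,\delta')$. For each $j=0,1,\dots,L-1$ we have $d(f^{m(k+j)}(x),f^{m(k+j)}(y))>\delta'$, so applying the contrapositive of the displayed implication with $a=f^{m(k+j)-i}(x)$ and $b=f^{m(k+j)-i}(y)$ gives $d(f^{m(k+j)-i}(x),f^{m(k+j)-i}(y))>\delta$ for each $i=0,1,\dots,m-1$. (For the first block, one may start far enough out that $mk\ge m-1$, which is no loss since the definition of thickness concerns \emph{arbitrarily} large blocks.) Taking the union over $j$ and $i$ shows that the entire interval $\{mk-m+1,mk-m+2,\dots,m(k+L-1)\}$, a block of length $mL$, lies in $N_f(x,y,\delta)$. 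Since $L$ was arbitrary, $N_f(x,y,\delta)\in\f_t$.

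The only genuine obstacle is the reverse direction: one must upgrade a block of \emph{multiples} of $m$ in $N_{f^m}(x,y,\delta')$ to a block of \emph{consecutive integers} in $N_f(x,y,\delta)$. Uniform continuity is precisely what fills the gaps between consecutive multiples of $m$, because it lets us propagate the separation $d(f^{mk}(x),f^{mk}(y))>\delta'$ backwards along the orbit to the intermediate iterates $f^{mk-1},f^{mk-2},\dots,f^{mk-(m-1)}$. Everything else is bookkeeping with blocks.
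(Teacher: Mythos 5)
Your proof is correct, and the reverse direction is essentially the paper's argument in different clothing: both use uniform continuity of $f^i$ for $0\le i\le m-1$ to propagate the separation at the multiples $ma$ backwards to the intermediate iterates $ma-1,\dots,ma-(m-1)$, thereby filling a block of length $mL$ in $N_f(x,y,\delta)$ (the paper packages this as showing $B=\bigcup_{q=0}^{m-1}(mA-q)$ is thick and contained in $N_f(x,y,\eta)$). The forward direction is where you genuinely diverge. The paper again invokes uniform continuity: for $n=rm+p\in N_f(x,y,\delta)$ with $0<p\le m$ it pulls the separation back from $f^n=f^p\circ f^{rm}$ to get $d\left(f^{rm}(x),f^{rm}(y)\right)>\eta$ for a smaller constant $\eta$, and then checks that the resulting set of quotients $r$ is thick. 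You instead observe that $N_{f^m}(x,y,\delta)=\{n: mn\in N_f(x,y,\delta)\}$ and simply extract the $L$ consecutive multiples of $m$ sitting inside any block of length $mL$ of $N_f(x,y,\delta)$; this keeps the same expansive constant $\delta$, needs no continuity hypothesis at all for this implication, and is the cleaner route --- it makes explicit that uniform continuity is only doing work in the passage from $f^m$ back to $f$. Your counting claim (a block of $mL$ consecutive integers contains exactly $L$ consecutive multiples of $m$) is correct, and the parenthetical worry about $mk\ge m-1$ in the reverse direction is unnecessary since $k\ge 1$ already gives $mk-m+1\ge 1$.
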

		\begin{proof}
			Let $f$ be $\pf_t-$expansive with $\f-$expansivity constant $\delta$. Fix $m\in \N$. By uniform continuity of $f$ there exists $\eta>0$ such that for $t,\; w\in X$	\begin{equation} \label{eq1}
				d\left(f^i(t),f^i(w)\right)\geq \delta \implies d(t,w)>\eta.
			\end{equation} 
			We show that $f^m$ is $\pf_t$-expansive with $\f_t-$expansivity constant $\eta$. Let $x,\; y\in X$ with $x\neq y$. Then $f$ is $\pf_t-$expansive implies $A=\xy \in \f_t$. For $n\in A$ there exists $r\in \N$ such that $n=rm+p$, for some $p$, $0<p\leq m$. Put $B=\{r\in \N:0<n-rm\leq m,\; n\in A\}$.
			
			\noindent CLAIM: $B\in \f_t$.
			
			\noindent Let $k\in \N.$ Then $A\in \f_t$ implies there is a block of length $km$ in $A$. Suppose this block is $$\{a,a+1,\dots, a+m,\dots, a+2m,\dots, a+km\}\subset A.$$
			By Division Algorithm, for this $a\in A$, there is $r\in \N$ such that $\frac{\displaystyle{a}}{\displaystyle{m}}-1 \leq r < \frac{\displaystyle{a}}{\displaystyle{m}}$. Therefore $r\in B$. Further, $\frac{\displaystyle{a}}{\displaystyle{m}} \leq r+1 < \frac{\displaystyle{a}}{\displaystyle{m}}+1$. Hence $a+m\in A$, implies $r+1\in B$. In general, $r+i\in B$ for $a+im\in A$, $0\leq i\leq k$. Hence $\{r,r+1,\dots r+k\}\subset B$ and thus $B$ has a block of length $k$. But $k\in \N$ is arbitrarry. Therefore $B\in \f_t$.
			Again, let $n\in A$ with $n=rm+p$. Then Equation $\ref{eq1}$ implies $$d\left(f^{rm}(x),f^{rm}(y) \right)>\eta.$$
			Therefore, $r\in N_{f^m}(x,y,\eta)$. Further, it is easy to observe that $B\subset N_{f^m}(x,y,\eta)$. But $B\in \f_t$. Therefore $N_{f^m}(x,y,\eta)\in \f_t$. Hence $f^m$ is $\f_t-$expansive.
			
			\smallskip
			\noindent Conversely, suppose $f^m$ is $\pf_t$-expansive with $\f_t-$expansivity constant $\delta$. Uniform continuity of $f$ implies that there exists $\eta >0$ such that for $t,w\in X$ \begin{equation} \label{eq2}
				d\left(f^i(t),f^i(w)\right)\geq \delta \implies d(t,w)>\eta, \mbox{ } 0\leq i\leq m-1.
			\end{equation}
			Let $x, \; y\in X$ with $x\neq y$. Then $A=N_{f^m}(x,y,\delta)\in \f_t$. Put $$B=\bigcup_{q=0}^{m-1}(mA-q).$$
			Here $mA-q=\{ms-q:s\in A\}$.
			
			\noindent CLAIM: $B\in \f_t$.
			
			\noindent Let $k\in \N$. Then there exists $j\in \N$ such that $jm^2>k$. It is sufficient to show, there is a block of length $jm^2$ in $B$. Now $A$ is a thick set. Therefore there is a block of length $jm$ in $A$, say, $a,a+1,\dots,a+m,\dots,a+jm-1.$ Note that for each $a+u\in \{a,a+1,\dots,a+jm-1\}\subset A$, the block of length $m$ given by $$ m(a+um-1), m(a+um-1)-1,\dots, m(a+um-1)-(m-1)$$ is in $B$. Therefore $$ma-(m-1), ma-(m-2),\dots,ma-1,ma,ma+1,\dots,ma+m,\dots,m(a+jm-1)$$ is a block of length $nm^2$ in $B$. Hence $B\in \f_t$.
			
			\smallskip
			\noindent In order to show that $N_f(x,y,\eta)\in \f_t$ it is sufficient to show that $B\subset N_f(x,y,\eta)$. Let $b\in B$ with $b=ma-q$, for some $a\in A$, for some $q$, $0\leq q\leq m-1$. Now $a\in A$ implies $d(f^{ma}(x),f^{ma}(y))>\delta$. Therefore, Equation \ref{eq2} implies $$d\left(f^b(x),f^b(y)\right)=d\left(f^{ma-q}(x),f^{ma-q}(y)\right)>\eta,$$ which further implies $b\in N_f(x,y,\eta)$. Hence the proof. 	
		\end{proof}
		
		\section{Generator and $\pf-$Expansivity}
		\noindent Keynes and Robertson introduced the notion of generator in \cite{Key} and characterized expansive homeomorphisms using generators. In this Section we obtain similar results for $\pf-$expansive maps. 
		
		\begin{definition}
			Let $X$ be a metric space and $f:X \longrightarrow X$ be a continuous map. A finite open cover $\mathscr{U}$ of $X$ is said to be an \emph{$\f-$generator} for $f$ if for every sequence $\{A_n\}$ of members of $\mathscr{U}$, $\bigcap_{n\in S}f^{-n}(\bar{A_n})$ contains at most one point, for all $S\in \f$. Here $\bar{A}$ denotes closure of $A$ in $X$.
		\end{definition}
		\begin{definition}
			Let $X$ be a metric space and $f:X \longrightarrow X$ be a continuous map. A finite open cover $\mathscr{U}$ of $X$ is said to be a \emph{weak $\f-$generator} for $f$ if for every sequence $\{A_n\}$ of members of $\mathscr{U}$, $\bigcap_{n\in S}f^{-n}(\bar{A_n})$ is at most one point for all $S\in \f$.
		\end{definition}
		
		\noindent Obviously if $\mathscr{U}$ is an $\f-$generator then $\mathscr{U}$ is a weak $\f-$generator. Suppose $f:X\to X$ is a $\pf-$expansive map with $\f-$expansivity constant $\delta$. Then for $x, \; y\in X$ with $x\neq y$, $\xy \in \f$. Equivalently, $f$ is $\pf-$expansive implies if $\xy \notin \f$, then $x=y$. Recall, for a family $\f$, dual of $f$ is denoted by $\f^*$ and is given by $\f^*=\{A\subset \N:A\cap B\neq \Phi \mbox{ for all }B\in \f\}$. In the following Theorem we show that $\pf-$expansive map has $\f^*-$generator.
		
		\begin{theorem} \label{T6}
			Let $X$ be a compact metric space and $f:X\to X$ be a continuous map. Suppose $f$ is $\pf-$expansive. Then $f$ has an $\f^*-$generator. Here $\f^*$ is the dual of $\f$.
		\end{theorem}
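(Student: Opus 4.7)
The plan is to mimic the classical argument that a positively expansive map admits a generator (the open cover by small-diameter sets), but keep careful bookkeeping about the indices over which the intersection is taken so that the weakening of the expansivity condition (to a membership in $\f$) matches the weakening of the generator condition (to membership in $\f^*$) through the duality $S\cap B\neq\Phi$.

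Concretely, let $\delta>0$ be an $\f$-expansivity constant for $f$. By compactness of $X$, I can choose a finite open cover $\mathscr{U}=\{U_1,\dots,U_k\}$ of $X$ such that $\diam(U_i)<\delta$ for every $i$; since $\diam(\bar{U_i})=\diam(U_i)$ in a metric space, the closures also satisfy $\diam(\bar{U_i})<\delta$. I claim $\mathscr{U}$ is an $\f^*$-generator. Fix any sequence $\{A_n\}\subset\mathscr{U}$ and any $S\in\f^*$, and suppose $x,y\in\bigcap_{n\in S}f^{-n}(\bar{A_n})$. Then for every $n\in S$, both $f^n(x)$ and $f^n(y)$ lie in $\bar{A_n}$, so
\[
d\bigl(f^n(x),f^n(y)\bigr)\le \diam(\bar{A_n})<\delta.
\]
In other words $n\notin N_f(x,y,\delta)$ for every $n\in S$, so $S\cap N_f(x,y,\delta)=\Phi$.

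Now I use the duality. Suppose, for contradiction, that $x\neq y$. By $\pf$-expansivity $N_f(x,y,\delta)\in\f$, and by definition of the dual family, every $S\in\f^*$ must meet every member of $\f$; in particular $S\cap N_f(x,y,\delta)\neq\Phi$, contradicting the previous paragraph. Hence $x=y$, so $\bigcap_{n\in S}f^{-n}(\bar{A_n})$ contains at most one point, and $\mathscr{U}$ is an $\f^*$-generator.

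There is no real obstacle beyond matching the two dualities correctly; the only small technical point to watch is the strict versus non-strict inequality, which is why I choose the members of $\mathscr{U}$ to have diameter strictly less than $\delta$ so that the closure bound $d(f^n(x),f^n(y))<\delta$ rules out membership in $N_f(x,y,\delta)=\{n:d(f^n(x),f^n(y))>\delta\}$. Finite covers of this kind exist because $X$ is compact (take the cover by open balls of radius $<\delta/2$ centred at a finite $\delta/2$-net).
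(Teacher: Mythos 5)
Your proposal is correct and follows essentially the same route as the paper: cover $X$ by finitely many open sets of diameter less than $\delta$ (the paper uses balls of radius $\delta/2$), observe that two points in $\bigcap_{n\in S}f^{-n}(\bar{A_n})$ stay within $\delta$ along $S$, so $S\cap N_f(x,y,\delta)=\Phi$, and then invoke the duality $\f$ versus $\f^*$ together with $\pf$-expansivity to conclude $x=y$. The only cosmetic difference is your explicit care with strict versus non-strict inequalities, which the paper sidesteps by noting that $d(f^n(x),f^n(y))\le\delta$ already excludes $n$ from $N_f(x,y,\delta)$.
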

		\begin{proof}
			Suppose $f$ is $\pf-$expansive with $\f-$expansivity constant $\delta$. We show that $f$ has $\f^*-$generator. $A\in \f^*$ provided that $\N\setminus A \notin \f$. Let $\mathscr{U}$ be a finite open cover of $X$ consisting of open balls of radius $\frac{\delta}{2}$ and let $S\in \f^*$. If possible suppose $x,y\in \bigcap_{n\in S}f^{-n}(\bar{A_n})$, where $\{A_n\}_{n\in S}$ is a sequence from elements of $\mathscr{U}$. Then for $n\in S$, $$d(f^n(x),f^n(y))\leq \delta,$$
			which further implies $\xy \cap S=\Phi$. But $\f^*$ is dual of $\f$. Therefore $\xy \notin \f$. Since $f$ is $\pf-$expansive with $\f-$expansivity constant $\delta$, it follows that $x=y$. Hence $f$ has $\f^*-$generator.
		\end{proof}
		
		\medskip 
		\noindent In the following Theorem we show that converse of above result is true.
		
		\begin{theorem} \label{T7}
			Let $X$ be a metric space and $f:X \longrightarrow X$ be a continuous map. Suppose $f$ has weak $\f-$generator. Then $f$ is $\pf^*$-expansive.
		\end{theorem}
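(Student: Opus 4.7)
The approach is a standard Lebesgue-number/contrapositive argument adapted to the family setting, directly mirroring the classical ``weak generator implies expansive'' direction. Given the weak $\f-$generator $\mathscr{U}$, the plan is to choose $\delta > 0$ so that every subset of $X$ of diameter at most $\delta$ is contained in $\bar{A}$ for some $A \in \mathscr{U}$; such a $\delta$ exists provided $X$ is compact, the setting implicit here and explicit in the companion Theorem \ref{T6}. I will then show that this $\delta$ is an $\f^*$-expansivity constant for $f$.

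Unwinding the dual family, $N_f(x,y,\delta) \in \f^*$ means $N_f(x,y,\delta) \cap B \neq \Phi$ for every $B \in \f$. Aiming at a contradiction, suppose there are distinct $x, y \in X$ with $N_f(x,y,\delta) \notin \f^*$; then there is some $B \in \f$ with $N_f(x,y,\delta) \cap B = \Phi$, so $d\left(f^n(x), f^n(y)\right) \leq \delta$ for every $n \in B$. By the choice of $\delta$, for each such $n$ pick $A_n \in \mathscr{U}$ with $\{f^n(x), f^n(y)\} \subset \bar{A_n}$; for $n \notin B$ pick $A_n \in \mathscr{U}$ arbitrarily (for example, one containing $f^n(x)$). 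This produces a sequence $\{A_n\}$ of members of $\mathscr{U}$ with $\{x, y\} \subset \bigcap_{n \in B} f^{-n}(\bar{A_n})$. Since $B \in \f$ and $x \neq y$, this contradicts the weak $\f-$generator hypothesis on $\mathscr{U}$, completing the proof by contrapositive.

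The main technical obstacle is the Lebesgue-number step: one needs the \emph{non-strict} inequality diameter $\leq \delta$ to force membership in some closure $\bar{A_n}$, which is handled cleanly by taking $\delta$ to be, say, half of the usual Lebesgue number of $\mathscr{U}$ (so that any pair at distance $\leq \delta$ lies in an open ball of radius strictly less than the Lebesgue number, hence in some $A_n$, hence in $\bar{A_n}$). Beyond that step the proof is purely formal bookkeeping on $\f$ and $\f^*$, and the only place where the structural hypothesis (weak $\f-$generator, rather than an ordinary weak generator) enters is in the very last sentence, where the intersection is restricted to indices in $B \in \f$.
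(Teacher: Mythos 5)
Your proof is correct and follows essentially the same route as the paper's: take $\delta$ from a Lebesgue number of $\mathscr{U}$, suppose $N_f(x,y,\delta)\notin\f^*$ to extract some $B\in\f$ disjoint from it, and trap $x,y$ in $\bigcap_{n\in B}f^{-n}(\bar{A_n})$ to contradict the weak $\f-$generator property. You are in fact slightly more careful than the paper on two points it glosses over: the existence of a Lebesgue number requires compactness (the theorem is stated only for a metric space), and the non-strict inequality $d\left(f^n(x),f^n(y)\right)\leq\delta$ needs the half-Lebesgue-number adjustment you make.
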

		\begin{proof}
			Suppose $\mathscr{U}$ is a weak $\f-$generator for $f$. Let $\delta$ be a Lebesgue number for $\mathscr{U}$. If $\f^*$ denote dual of $\f$, then we show that $f$ is $\pf^*-$expansive with $\f^*-$expansivity constant $\delta$. Suppose $x, \; y\in X$ and if possible suppose $\xy \notin \f^*$. Then there exists $A\in \mathscr{F}$ such that $N_f(x,y,\delta)\cap A=\Phi$. Therefore $d\left(f^n(x),f^n(y)\right)\leq \delta$, for all $n\in A$. This further implies that for each $n\in A$, there exists $A_n\in \mathscr{U}$ such that $\{f^n(x),f^n(y)\}\subset A_n$. Hence $x,y \in \bigcap_{n\in A}f^{-n}(A_n)$. But $\mathscr{U}$ is a weak $\f-$generator. Therefore $x=y$.
		\end{proof}
		
		\noindent As a consequence of the Theorem \ref{T6} and Theorem \ref{T7} following holds:
		\begin{theorem} \label{T4}
			Let $X$ be a compact metric space and $f:\longrightarrow X$ be a continuous map. Then the following are equivalent:
			\begin{itemize}
				\item [1.] $f$ is $\pf-$expansive.
				\item[2.] $f$ has $\f^*-$generator.
				\item[3.] $f$ has weak $\f^*-$generator.
			\end{itemize}
		\end{theorem}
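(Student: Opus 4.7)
The strategy is to prove the cyclic chain $(1) \Rightarrow (2) \Rightarrow (3) \Rightarrow (1)$, drawing on Theorems \ref{T6} and \ref{T7} at the two endpoints and on the remark recorded right after the definitions for the middle step. Since each step is either a direct invocation or a one-line observation, the proof is short; the only genuine content lies in recognizing how to feed Theorem \ref{T7} back into (1).

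For $(1) \Rightarrow (2)$ I would simply invoke Theorem \ref{T6}: if $f$ is $\pf$-expansive with constant $\delta$, then the finite open cover of $X$ by balls of radius $\delta/2$ serves as an $\f^*$-generator for $f$. The implication $(2) \Rightarrow (3)$ is the observation already recorded after the two definitions: the condition defining an $\f^*$-generator is at least as strong as the one defining a weak $\f^*$-generator, so any $\f^*$-generator is automatically a weak $\f^*$-generator.

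The substantive step is $(3) \Rightarrow (1)$. My plan is to apply Theorem \ref{T7} with the family $\f$ replaced throughout by $\f^*$; possession of a weak $\f^*$-generator then yields that $f$ is $\pf^{**}$-expansive. To close the cycle one must identify $\pf^{**}$-expansivity with $\pf$-expansivity, that is, invoke the equality $\f = \f^{**}$. For the families of interest to the paper this is standard and immediate: $\f_t^* = \f_s$ and $\f_s^* = \f_t$, so $\f_t^{**} = \f_t$ and $\f_s^{**} = \f_s$; likewise $\f_{cf}^*$ is the family of all infinite subsets of $\N$, whose dual is $\f_{cf}$ again.

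This double-dual step is the only real subtlety I anticipate, since for a completely arbitrary upward hereditary $\f$ only the inclusion $\f \subseteq \f^{**}$ is automatic, while the reverse inclusion $\f^{**} \subseteq \f$ can fail. Flagging the hypothesis $\f = \f^{**}$ explicitly in the statement, or verifying it case-by-case for the concrete families $\f_t$, $\f_s$, $\f_{cf}$ being studied, is what I would do to make the argument airtight. With that understanding in place, the three implications combine into the equivalence claimed.
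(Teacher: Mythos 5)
Your proposal is correct and follows exactly the route the paper intends: the paper gives no separate proof of Theorem \ref{T4}, declaring it a consequence of Theorems \ref{T6} and \ref{T7}, which is precisely your chain $(1)\Rightarrow(2)\Rightarrow(3)\Rightarrow(1)$ with Theorem \ref{T7} applied to the family $\f^*$ in the last step. One small refinement: your worry about the inclusion $\f^{**}\subseteq\f$ is unnecessary, since for any upward hereditary family one has $A\in\f^*$ if and only if $\N\setminus A\notin\f$, whence $\f^{**}=\f$ holds automatically and no case-by-case verification or added hypothesis is required.
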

		\section{$\mathscr{F}$-expansivity}
		
		\noindent Let $(S, \ast)$ be a topological semigroup. For $x, \; y \in S$, we denote $x\ast y$ by $xy$. A subset $T$ of $S$ is said to be \emph{syndetic} if there is a compact subset $K$ of $S$ such that for every $s\in S$, there is $k\in K$ with $ks\in T$. A subset $W$ of $S$ is said to be \emph{thick} if for any compact subset $K$ of $S$, there is $s\in S$ such that $Ks\subset W$ \cite{RT}. Here $Ks = \{ks : k\in K\}$. A family of subsets of $S$ is any subset of $P(S)$, power set of $S$, which is upward hereditary. Family of thick subsets of $S$ is denoted by $\f_t$ and family of syndetic subsets of $S$ is denoted by $\f_s$.
		
		\smallskip
		\noindent Consider semigroup $(\Z,+)$. Then a thick subset of $\N$ is also a thick subset of $\Z$. Further note that a syndetic subset of $\Z$ is neither bounded below nor bounded above. For, if $A\subset \Z$ is syndetic set which is either bounded below or bounded above, then $\Z \setminus A$ will contain blocks of length $n$ for all $n\in \N$ and hence $\Z\setminus A$ is a thick set, which is not possible.
		
		\smallskip
		\noindent Recall, a homeomorphism $f:X \longrightarrow X$ is said to be \emph{expansive} if there exists $\delta>0$ such that for $x \neq y$ in $X$, $N_f(x,y,\delta)\neq \Phi$. The constant $\delta$ is called an \emph{expansivity constant} for $f$. Here $\xy=\{n\in \Z\setminus \{0\}: d\left(f^n(x),f^n(y)\right)>\delta\}$. Note that $\xy$ is an infinite set if $f$ is an expansive homeomorphism.
		
		\begin{definition}
			A homeomorphism $f:X \longrightarrow X$ is said to be \emph{$\mathscr{F}$-expansive} if there exists $\delta>0$ such that for any two distinct points $x$ and $y$ in $X$ the set $N_f(x,y,\delta)\in \mathscr{F}$,  where $\mathscr{F}$ is a family of subsets of $\Z\setminus \{0\}$. The constant $\delta$ is called an $\f-$expansivity constant for $f$.
		\end{definition}
		
		\begin{definition} \cite{Bry} Let $f$ be an expansive homeomorphism on a compact metric space $X$. Two points $x, \; y \in X$ are said to be \emph{positively (negatively) asymptotic} if for each $\epsilon>0$, there exists an integer $N$ such that $n>N (n<N)$ implies $d(f^n(x),f^n(y))< \epsilon$.	
		\end{definition}
		
		\begin{lemma}\label{L1}\cite{BF1}
			Let $X$ be a compact metric space and $f:X \longrightarrow X$ be an expansive homeomorphism on $X$, then there exists $x,\; y,\; z,\; w\in X$ such that $x$ and $y$ are positively asymptotic under $f$ and $z$ and $w$ are negatively asymptotic under $f$.
		\end{lemma}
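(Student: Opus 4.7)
If $X$ is finite the lemma holds trivially by taking $y=x$ and $w=z$, so I assume $X$ is infinite. The strategy is a classical ``last moment of closeness'' argument: use accumulation points of $X$ to start with two close but distinct orbits, push them forward to the first instant at which the forward images get separated, and take a subsequential limit to produce a pair whose entire backward orbit stays $\delta$-close. Then upgrade that uniform backward bound to genuine negative asymptoticity. The positively asymptotic pair is obtained by applying the same argument to $f^{-1}$.

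Concretely, let $\delta$ be an expansivity constant for $f$. Since $X$ is an infinite compact metric space it has an accumulation point, so I may choose sequences $\{x_k\},\{y_k\}$ in $X$ with $x_k\neq y_k$ and $d(x_k,y_k)\to 0$. For each $k$ expansivity yields some $n\in\Z\setminus\{0\}$ with $d(f^{n}(x_k),f^{n}(y_k))>\delta$; let $n_k$ be such an $n$ of minimal absolute value. Passing to a subsequence I may assume all $n_k$ have the same sign, say $n_k>0$ (the opposite sign case produces a positively asymptotic pair directly by the same argument). Uniform continuity of each fixed iterate $f^j$ together with $d(x_k,y_k)\to 0$ forces $n_k\to\infty$.

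Set $u_k=f^{n_k}(x_k)$, $v_k=f^{n_k}(y_k)$; by compactness, after a further subsequence, $u_k\to u^{*}$, $v_k\to v^{*}$, with $d(u^{*},v^{*})\geq\delta$, so $u^{*}\neq v^{*}$. The minimality of $|n_k|$ gives $d(f^{j}(x_k),f^{j}(y_k))\leq\delta$ for $|j|<n_k$, which rewrites as $d(f^{j}(u_k),f^{j}(v_k))\leq\delta$ for $-n_k\leq j<0$. Fixing any $j\leq 0$ and letting $k\to\infty$, continuity of $f^{j}$ gives $d(f^{j}(u^{*}),f^{j}(v^{*}))\leq\delta$ for every $j\leq 0$. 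To upgrade this to negative asymptoticity, suppose for contradiction that $d(f^{-m_k}(u^{*}),f^{-m_k}(v^{*}))\geq\eta>0$ for some $m_k\to+\infty$; by compactness extract $a,b\in X$ with $f^{-m_k}(u^{*})\to a$, $f^{-m_k}(v^{*})\to b$, and $d(a,b)\geq\eta$. For any fixed $m\in\Z$ one has $m-m_k<0$ eventually, so the uniform bound on the non-positive ray yields $d(f^{m}(a),f^{m}(b))\leq\delta$ for every $m\in\Z$, contradicting expansivity.

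The negatively asymptotic pair is then $z=u^{*}$, $w=v^{*}$, and the positively asymptotic pair is obtained by running the whole construction on the expansive homeomorphism $f^{-1}$ (same expansivity constant). The main obstacle is the final contradiction step: one must secure the $\delta$-bound on the \emph{entire} ray $j\leq 0$, rather than on finitely many indices, so that the double limit $m-m_k\to -\infty$ stays within the range on which we control the distance; it is precisely the selection of $n_k$ as a minimum-modulus separation time that provides this uniform-in-$k$ bound and permits the limit $k\to\infty$ to be taken for every fixed $j\leq 0$.
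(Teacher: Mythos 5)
The paper itself gives no proof of this lemma --- it is imported from Bryant's 1960 paper --- so your argument stands or falls on its own. Its core is sound as far as it goes: choosing $x_k\neq y_k$ with $d(x_k,y_k)\to 0$, taking a separation time $n_k$ of minimal modulus (which necessarily satisfies $|n_k|\to\infty$ by uniform continuity of finitely many iterates), recentering at time $n_k$, extracting limits $u^{*}\neq v^{*}$ with $d(f^{j}(u^{*}),f^{j}(v^{*}))\leq\delta$ on a half-ray, and upgrading that uniform bound to genuine asymptoticity by the double-limit contradiction against expansivity --- all of this is correct and is the standard way to manufacture \emph{one} asymptotic pair.

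The gap is the final step, where you claim the other pair comes from running the construction on $f^{-1}$. Since $d(f^{-n}(x),f^{-n}(y))>\delta$ exactly when $-n$ is a separation time for $f$, the minimal-modulus separation times for $f^{-1}$ are precisely the negatives of those for $f$. So if your subsequence for $f$ has $n_k>0$ (yielding a negatively asymptotic pair for $f$), the run on $f^{-1}$ lands in your ``opposite sign'' case, which by your own parenthetical produces a positively asymptotic pair \emph{for $f^{-1}$} --- that is, once again a negatively asymptotic pair for $f$. Your argument therefore establishes that at least one of the two kinds of pairs exists, but not both, and no choice of base sequences is guaranteed to flip the sign. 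Closing the gap needs a genuinely new ingredient: for instance, if no distinct pair had a $\delta$-bounded forward orbit, then $f$ itself would be positively expansive, and a positively expansive \emph{homeomorphism} of a compact metric space forces the space to be finite (Schwartzman; see also Aoki--Hiraide); symmetrically for backward orbits. On an infinite $X$ this hands you a distinct pair with $\delta$-bounded forward orbit and one with $\delta$-bounded backward orbit, and your upgrade step then finishes. Two smaller points: the lemma is only meaningful (and, on finite spaces, only true) when $x\neq y$ and $z\neq w$, so your finite-case remark with $y=x$ proves a vacuous statement and the infiniteness of $X$ must actually be assumed; and at $j=0$ you have $d(u^{*},v^{*})\geq\delta$ rather than $\leq\delta$, which is harmless only because your contradiction step uses exclusively $j<0$.
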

		
		\begin{theorem}
			There is no expansive homeomorphism on a compact metric space which is $\mathscr{F}_s-$expansive homeomorphism.
		\end{theorem}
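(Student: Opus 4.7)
The plan is to argue by contradiction, leveraging Lemma \ref{L1} to produce an asymptotic pair whose separation set is bounded on one side, and then invoking the observation recorded at the beginning of this section that a syndetic subset of $\Z$ is neither bounded above nor bounded below.

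More precisely, I would proceed as follows. Suppose for contradiction that $f \colon X \to X$ is an expansive homeomorphism on a compact metric space $X$ that is additionally $\mathscr{F}_s$-expansive, and let $\delta > 0$ be its $\mathscr{F}_s$-expansivity constant. First, I would invoke Lemma \ref{L1} to obtain distinct points $x, y \in X$ that are positively asymptotic under $f$. Next, applying the definition of positive asymptoticity to the particular value $\epsilon = \delta$, I would extract an integer $N$ such that $d(f^n(x), f^n(y)) < \delta$ for every $n > N$. This immediately forces $N_f(x,y,\delta) \subseteq (\Z \setminus \{0\}) \cap (-\infty, N]$, so $N_f(x,y,\delta)$ is bounded above in $\Z$. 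Finally, since $x \neq y$, the assumed $\mathscr{F}_s$-expansivity of $f$ requires $N_f(x,y,\delta) \in \mathscr{F}_s$, i.e.\ this set must be a syndetic subset of $\Z \setminus \{0\}$; this contradicts the fact, already noted in the discussion preceding the definition of $\mathscr{F}$-expansivity, that no syndetic subset of $\Z$ is bounded above. An entirely symmetric argument using the negatively asymptotic pair $z, w$ from Lemma \ref{L1} gives a set bounded below and delivers the same contradiction.

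There is not really a serious obstacle here: once Lemma \ref{L1} is in hand, the proof is just a matching-of-definitions. The only small point to handle carefully is ensuring that the positively asymptotic pair produced by Lemma \ref{L1} consists of \emph{distinct} points (which it does by the statement of the lemma), since otherwise $N_f(x,y,\delta)$ would be vacuously irrelevant; and, of course, that one remembers $N_f(x,y,\delta)$ lives in $\Z \setminus \{0\}$ rather than in $\N$, so that the appropriate unboundedness property of syndetic subsets of $\Z$ is the one being invoked.
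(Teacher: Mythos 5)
Your proposal is correct and follows essentially the same route as the paper: both invoke Lemma \ref{L1} to obtain a positively asymptotic pair, observe that positive asymptoticity forces $N_f(x,y,\delta)$ to miss a terminal segment $\{N+1, N+2,\dots\}$ of $\Z$ (i.e.\ to be bounded above), and conclude from the remark at the start of Section 5 that such a set cannot be syndetic in $\Z$. The only cosmetic difference is that the paper runs the argument for every $\epsilon>0$ rather than specializing to the assumed expansivity constant, and phrases the final step via the thickness of $\{N+1,N+2,\dots\}$ rather than boundedness; these are equivalent.
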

		\begin{proof}
			Let $X$ be a compact metric space and $f:X \longrightarrow X$ be an expansive homeomorphism with expansive constant $\delta$. Then by  Lemma \ref{L1} there exist $x,\; y,\; z,\; w\in X$ such that $x$ and $y$ are positively asymptotic and $z$ and $w$ are negatively asymptotic for $f$. Let $\epsilon >0$ be given. Then $x$ and $y$ are positively asymptotic implies there exist $N\in \Z$ such that for all $n>N$ implies $d\left(f^n(x),f^n(y)\right)\leq \epsilon$. Therefore 
			\begin{equation}\label{eq7}
				\{N+1,N+2,N+3,\dots\}\cap N_f(x,y,\epsilon)=\Phi. 
			\end{equation}	
			Note that $\{N+1,N+2,N+3,\dots\}$ is a thick set in $\Z$ as it is a thick set in $\N$. Hence Equation \ref{eq7} implies $N_f(x,y,\epsilon)\notin \f_s$ for any $\epsilon >0$. Therefore $f$ is not $\f_s-$expansive.
		\end{proof}
		\noindent In the following result we will see the relationship between $\mathscr{F}$-expansivity of a homeomorphism and the shift map on inverse limit space. 
		\begin{theorem}
			Let $X$ be a compact metric space and $f:X \longrightarrow X$ be a homeomorphism. Then $f$ is $\mathscr{F}-$expansive if and only if the shift map $\sigma_f:X_f \longrightarrow X_f$ is $\mathscr{F}$-expansive. Here $\mathscr{F}$ is a family of subsets of $\mathbb{Z}$.
		\end{theorem}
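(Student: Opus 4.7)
The plan is to exploit the fact that $\sigma_f$ is conjugate to $f$ via the zeroth coordinate projection. Write $\pi_0 : X_f \to X$ for $\pi_0(\bar{x}) = x_0$. Then $X_f$ is closed in the compact product $X^{\mathbb{Z}}$ and hence compact; the map $\pi_0$ is continuous and, because $f$ is a homeomorphism, it is a bijection whose inverse is $x \mapsto (f^n(x))_{n\in\mathbb{Z}}$. A continuous bijection between compact metric spaces is a homeomorphism, so $\pi_0^{-1}$ is continuous and both $\pi_0$, $\pi_0^{-1}$ are uniformly continuous. The relation $\pi_0 \circ \sigma_f = f \circ \pi_0$ yields $\sigma_f^n(\bar{x}) = \pi_0^{-1}(f^n(x_0))$ for every $n\in\mathbb{Z}$. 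Equipping $X_f$ with the standard metric $D(\bar{x}, \bar{y}) = \sum_{i\in\mathbb{Z}} 2^{-|i|} d(x_i, y_i)$, I would record the two inequalities that drive the proof: $D(\bar{x}, \bar{y}) \geq d(x_0, y_0)$, and (by uniform continuity of $\pi_0^{-1}$) for every $\delta > 0$ there exists $\eta > 0$ such that $D(\pi_0^{-1}(u), \pi_0^{-1}(v)) > \delta$ implies $d(u,v) > \eta$.

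For the forward direction, suppose $f$ is $\mathscr{F}$-expansive with constant $\delta$ and take $\bar{x} \neq \bar{y}$ in $X_f$. Because $f$ is injective and $x_{i+1} = f(x_i)$, agreement of any single coordinate would propagate to all coordinates, so $x_0 \neq y_0$, and therefore $N_f(x_0, y_0, \delta) \in \mathscr{F}$. For each $n$ in this set, $d(x_n, y_n) = d(f^n(x_0), f^n(y_0)) > \delta$ and hence $D(\sigma_f^n(\bar{x}), \sigma_f^n(\bar{y})) \geq d(x_n, y_n) > \delta$. This exhibits the inclusion $N_f(x_0, y_0, \delta) \subseteq N_{\sigma_f}(\bar{x}, \bar{y}, \delta)$, and upward heredity of $\mathscr{F}$ gives $N_{\sigma_f}(\bar{x}, \bar{y}, \delta) \in \mathscr{F}$, proving $\sigma_f$ is $\mathscr{F}$-expansive with constant $\delta$.

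For the converse, assume $\sigma_f$ is $\mathscr{F}$-expansive with constant $\delta$ and let $\eta$ be the modulus supplied in the setup. Given distinct $x, y \in X$, set $\bar{x} = \pi_0^{-1}(x)$ and $\bar{y} = \pi_0^{-1}(y)$; then $\bar{x} \neq \bar{y}$, so $N_{\sigma_f}(\bar{x}, \bar{y}, \delta) \in \mathscr{F}$. For any $n$ in this set, $D(\pi_0^{-1}(f^n(x)), \pi_0^{-1}(f^n(y))) = D(\sigma_f^n(\bar{x}), \sigma_f^n(\bar{y})) > \delta$, which by the choice of $\eta$ forces $d(f^n(x), f^n(y)) > \eta$. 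Hence $N_{\sigma_f}(\bar{x}, \bar{y}, \delta) \subseteq N_f(x, y, \eta)$, and upward heredity yields $N_f(x, y, \eta) \in \mathscr{F}$. I do not anticipate a serious obstacle: this is essentially the inverse-limit form of the conjugacy principle already established in Proposition \ref{Th5}, and the only mild subtlety is that the expansivity constants $\delta$ and $\eta$ need not coincide, with upward heredity of $\mathscr{F}$ absorbing the mismatch on both sides.
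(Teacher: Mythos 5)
Your proof is correct, and it reaches the result by a slightly different route than the paper. You package everything through the conjugacy $\pi_0:X_f\to X$, invoking the abstract facts that a continuous bijection from a compact space onto a Hausdorff space is a homeomorphism and that continuous maps on compact metric spaces are uniformly continuous; the paper instead carries out the corresponding estimate by hand in the converse direction (choosing $k$ with $\alpha/2^k<\delta/3$ and controlling the coordinates $|i|\le k$ via uniform continuity of $f^{\pm 1}$), which amounts to verifying uniform continuity of your $\pi_0^{-1}$ explicitly. The more substantive difference is in the forward direction: the paper picks an arbitrary coordinate $j$ with $x_j\neq y_j$, obtains $N_{\sigma_f}(\sigma_f^j(\bar{x}),\sigma_f^j(\bar{y}),\delta)\in\mathscr{F}$, and then passes to $N_{\sigma_f}(\bar{x},\bar{y},\delta)$ via the translated inclusion $N_{\sigma_f}(\sigma_f^j(\bar{x}),\sigma_f^j(\bar{y}),\delta)+j\subset N_{\sigma_f}(\bar{x},\bar{y},\delta)$ --- a step that tacitly uses translation invariance of $\mathscr{F}$ rather than mere upward heredity. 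Your observation that $f$ being a homeomorphism forces $x_0\neq y_0$ whenever $\bar{x}\neq\bar{y}$ lets you take $j=0$ and avoid the translation entirely, so your argument is both cleaner and strictly more careful on this point. The trade-off is only that your approach leans on compactness in two places (for the homeomorphism claim and for uniform continuity), whereas the paper's hands-on estimate makes visible exactly which quantitative constants ($\eta$ in terms of $\delta$) are involved.
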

		\begin{proof}
			Suppose $f$ is $\mathscr{F}-$expansive with $\f-$expansivity constant $\delta >0$. We show that $\sigma_f$ is $\f-$expansive with $\f-$expansivity constant $\delta$. Let $\bar{x}=(x_i)_{i\in \Z}$ and $\bar{y}=(y_i)_{i\in \Z}$ be two distinct points in $X_f$. Then $x_j\neq y_j$ for some $j\in \Z$, and $\f-$expansivity of $f$ implies that $N_f(x_j,y_j,\delta)\in \mathscr{F}$. Let $k\in N_f(x_j,y_j,\delta)$. Then 
			$$ \bar{d}\left(\sigma_f^k(\sigma_f^j(\bar{x})),\sigma_f^k(\sigma_f^j(\bar{y}))\right)=\sum_{i=-\infty}^{\infty}\frac{\displaystyle{d\left(f^k(\sigma_f^j(x)_i),f^k(\sigma_f^j(y)_i)\right)}}{2^{|i|}}>\delta. $$
			Therefore, $k\in N_{\sigma_f}(\sigma_f^j(\bar{x}),\sigma_f^j(\bar{y}),\delta)$. But this implies
			\begin{equation}\label{eq5}
				N_{\sigma_f}(\sigma_f^j(\bar{x}),\sigma_f^j(\bar{y}),\delta)+j\subset N_{\sigma_f}(\bar{x},\bar{y},\delta).
			\end{equation}  
			Hence $N_f(x_j,y_j,\delta)\subset N_{\sigma_f}(\sigma_f^j(\bar{x}),\sigma_f^j(\bar{y}),\delta)$. Therefore $N_{\sigma_f}(\sigma_f^j(\bar{x}),\sigma_f^j(\bar{y}),\delta) \in \f$ and Equation \ref{eq5} implies $N_{\sigma_f}(\bar{x},\bar{y},\delta)\in \f$. But $\bar{x},\; \bar{y} \in X_f$ are arbitrary. Thus, $\sigma_f$ is $\f-$expansive.
			
			\smallskip
			\noindent Conversely, suppose $\sigma_f$ is $\f-$expansive with $\f-$expansivity constant $\delta$. Let $\alpha$ be the diameter of $X$. Choose $k\in \N$ such that $0<\frac{\alpha}{2^k}<\frac{\delta}{3}$. By uniform continuity of $f$ there exists $\eta>0$ such that 
			\begin{equation} \label{eq4}
				d(t,w)<\eta \implies d\left(f^i(t),f^i(w)\right)<\frac{\delta}{9}, \mbox{ for all }i \mbox{, }|i|\leq k.
			\end{equation}
			Let $x,\;y\in X$ such that $x\neq y$. Take $\bar{x}=(\dots,x_{-1},x_0=x,x_1,\dots)$, $\bar{y}=(\dots,y_{-1},y_0=y,y_1,\dots) \in X_f$. Then $\bar{x}\neq \bar{y}$. Therefore $\f-$expansivity of $\sigma_f$ implies $N_{\sigma_f}(\bar{x},\bar{y},\delta)\in \f$. We complete the proof by showing that  $N_{\sigma_f}(\bar{x},\bar{y},\delta) \subset N_f(x,y,\eta)$. Suppose $n\notin N_f(x,y,\eta)$. Then $d(f^n(x),f^n(y))<\eta$. Equation \ref{eq4} implies $$d\left(f^i(f^n(x)),f^i(f^n(y))\right)<\frac{\delta}{9},$$ for all $i$ with $|i|\leq k$. It is easy to verify that $\bar{d}\left(\sigma_f^n(\bar{x}),\sigma_f^n(\bar{y})\right)<\delta$.	Thus, $n\notin N_{\sigma_f} (\bar{x},\bar{y},\delta)$. Hence, $N_f(x,y,\eta) \subset N_{\sigma_f}(\bar{x},\bar{y},\delta)$ and therefore, $N_f(x,y,\eta)\in \f$. Thus, $f$ is $\f-$expansive with $\f-$expansivity constant $\eta$.
		\end{proof}
		
		\medskip

	\end{Large}
\end{document}